\begin{document}


\title{Asymptotic expansions of several series \\ and their application}

\author{Viktor P. Zastavnyi\vspace*{.5cm}\\ {\small Version published in Ukrainian Mathematical Bulletin, vol. 6(2009), \No~4}}


\date{}

\theoremstyle{plain}
\newtheorem{theorem}{Theorem}[section]
\newtheorem{lemma}{Lemma}[section]

\newtheorem{proposition}{Proposition}[section]
\newtheorem{corollary}{Corollary}
\newtheorem{definition}{Definition}[section]
\theoremstyle{definition}
\newtheorem{example}{Example}[section]
\newtheorem{remark}{Remark}[section]
\newcommand{\keywords}{\textbf{Key words. }\medskip}
\newcommand{\subjclass}{\textbf{MSC 2000. }\medskip}
\renewcommand{\abstract}{\textbf{Abstract. }\medskip}
\renewcommand{\refname}{\textbf{References}}
\numberwithin{equation}{section}


\renewcommand{\Re}{\mathop{\rm Re}\nolimits}
\renewcommand{\Im}{\mathop{\rm Im}\nolimits}

\newcommand{\res}{\mathop{\rm res}\nolimits}

\newcommand{\N}{\mathbb{N}}
\newcommand{\R}{\mathbb{R}}
\newcommand{\Z}{\mathbb{Z}}
\renewcommand{\C}{\mathbb{C}}

\newcommand{\agam}{{a,\gamma,\alpha,\mu}}

\maketitle

\begin{abstract}
Asymptotic expansions of series
$\sum_{k=0}^{\infty}\varepsilon^k(k+a)^\gamma e^{-(k+a)^\alpha x}$
and
$\sum_{k=0}^{\infty}\frac{\varepsilon^k(k+a)^\gamma}{(x(k+a)^\alpha+1)^\mu}$
in powers of $x$ as $x\to+0$ are found, where $\varepsilon=1$ or
$\varepsilon=-1$. These expansions are applied to obtain precise
inequalities for Mathieu serieses.
\end{abstract}

\subjclass {34E05, 26D15.}

\keywords{Asymptotic expansion, residues, generalized Mathieu
series, inequalities.}

\section{Introduction and formulation of main results} \label{subsec1}

One of the purposes of this paper is to investigate functional
series of the form
   \begin{equation}\label{f4}
    f(x,a,\gamma,\alpha):=\sum_{k=0}^{\infty}(k+a)^\gamma e^{-(k+a)^\alpha x}
    \;,\;x>0\,,
    \end{equation}
    \begin{equation}\label{f3}
     \widetilde{f}(x,a,\gamma,\alpha):=\sum_{k=0}^{\infty}(-1)^k(k+a)^\gamma
     e^{-(k+a)^\alpha x} \;,\; x>0\,,
    \end{equation}
with parameters $a>0$, $\gamma\in \R$ and $\alpha>0$.
Series~\eqref{f4} and~\eqref{f3} appear in many problems of the
analysis. In particular, for $x=\ln\frac{1}{\rho}$, $a=\frac 12$,
$\alpha=1$, $\gamma=-r-1$ and $r\in\N$ series~\eqref{f4}
and~\eqref{f3} appeared in the paper due to
A.~F.~Timan~\cite{Timan} in 1950. He proved that these series give
an exact value of the remainder when periodic differentiable
functions are approximated by Poisson integrals. Finding a
complete asymptotic representation was the aim of the papers due
to L.~V.~Malei~\cite{Maley}, \'{E}.~L.~Shtark~\cite{Stark},
V.~A.~Baskakov~\cite{Baskakov}, and K.~M.~Zhigallo and
Yu.~I.~Kharkevich~\cite{Harkevich2002_1}. A complete solution to
this problem was obtained in the author's
paper~\cite{Zast2009_mz}, where expansions in series in powers of
$x$ were found in the explicit form for the functions~\eqref{f4}
and~\eqref{f3} with $a>0$, $\alpha=1$, $\gamma=-r-1$ and
$r\in\Z_+$ if $0<x<2\pi$ and $0<x<\pi$ respectively.

Applying the residue theory, Gel'fond~\cite[\S 4.3]{Gelfond} in
1966 found an asymptotic expansion in powers of $x^k$, $k\in\Z_+$
as $x\to+0$ for the function~\eqref{f4} with $a=1$,
$-\frac{\gamma+1}{\alpha}\not\in\Z_+$. Here $\Z_+:=\N\bigcup\{0\}$
stands for the set of all nonnegative integers. In the case $a=1$,
$-\frac{\gamma+1}{\alpha}\in\Z_+$ he pointed out only that the sum
in an asymptotic expansion {\it has to be changed in an
appropriate way}. In~\cite[\S 4.3]{Gelfond}, it was also claimed
that for $a=1$ an asymptotic expansion of the function~\eqref{f3}
in powers of $x^k$, $k\in\Z_+$ as $x\to+0$  can be obtained
similarly. In 2008, using the Euler--Maclaurin formula it was
found by the author~\cite{Zast2008} the asymptotic expansion of
the functions~\eqref{f4} and~\eqref{f3} as $x\to+0$ for any $a>0$
and $\gamma\in\Z_+$, $\alpha\in\N$ (note that, in examples on
pp.~56--57 of~\cite{Zast2008}, the term $(-1)^{\alpha k+\gamma}$
is missing under the sum sign in the right-hand side of asymptotic
expansions).

In theorems~\ref{th4} and~\ref{th5} here the asymptotic expansions
of the functions~\eqref{f4} and~\eqref{f3} as $x\to+0$ are given
for all admissible parameters. Coefficients of these expansions
are expressed by the Hurwitz function ${\zeta}(s,a)$ and the
function $\widetilde{\zeta}(s,a)$ respectively. The last ones are
defined by formulas
\begin{equation}\label{gurv}
\begin{split}
 &{\zeta}(s,a):=\sum_{k=0}^{\infty}\frac{1}{(k+a)^s}\;,\;\Re s>1\;;
\\&
 \widetilde{\zeta}(s,a):=\sum_{k=0}^{\infty}\frac{(-1)^k}{(k+a)^s}\;,\;\Re
 s>0\;,
\end{split}
\end{equation}
for a fixed $a>0$.

Using the Hermite formula one can continue the Hurwitz function
analytically to $\C\setminus\{1\}$. Moreover, the point $s=1$
stands for its first-order pole, and for $a>0$ the following
relations hold (see~\cite{UW,Bateman}):
\begin{equation}\label{polus}
\begin{split}
 &\lim_{s\to 1}\left({\zeta}(s,a)-\frac{1}{s-1}\right)=-\frac{\Gamma'(a)}{\Gamma(a)}\;,\\
 &\lim_{s\to 1}\left({\zeta}(s,a)-2^{1-s}{\zeta}\left(s,\frac{a+1}{2}\right)\right)=
 -\frac{\Gamma'(a)}{\Gamma(a)}+\frac{\Gamma'\left(\frac{a+1}{2}\right)}{\Gamma\left(\frac{a+1}{2}\right)}+\ln
2\;.
 \end{split}
\end{equation}
Here $\Gamma(s)=\int_{0}^{+\infty}e^{-t}\,t^{s-1}\,dt$, $\Re s>0$
is referred to as the Euler gamma-function. The relation
\begin{equation}\label{gurv1}
 \widetilde{\zeta}(s,a)={\zeta}(s,a)-2^{1-s}{\zeta}\left(s,\frac{a+1}{2}\right)\;,\;\Re s>1\;,
\end{equation}
implies that the function $\widetilde{\zeta}(s,a)$ is analytically
continued to $\C$, and \begin{equation}\label{gurv2}
\widetilde{\zeta}(1,a)=
-\frac{\Gamma'(a)}{\Gamma(a)}+\frac{\Gamma'\left(\frac{a+1}{2}\right)}{\Gamma\left(\frac{a+1}{2}\right)}+\ln
2\;,\;a>0\;.
\end{equation}
\begin{theorem}\label{th4}
Suppose that $a>0$, $\gamma\in \R$, and $\alpha>0$. Then the
following asymptotic expansions hold:
\begin{equation}\label{ass1}
f(x,a,\gamma,\alpha)\underset{x\to+0}{\sim}
\frac{1}{\alpha}\;\Gamma\left(\frac{\gamma+1}{\alpha}\right)\,
      x^{-\frac{\gamma+1}{\alpha}}+
      \sum_{k=0}^{\infty}\frac{(-1)^k}{k!}\,\zeta(-\alpha
      k-\gamma,a)\,x^k\;,
      \end{equation}
      \begin{equation}\label{ass2}
\begin{split}
f(x,a,\gamma,\alpha)\underset{x\to+0}{\sim}
&\;\frac{(-1)^rx^r}{\Gamma(r+1)}\left(-\frac{\ln
x}{\alpha}+\frac{\Gamma'(r+1)}{\Gamma(r+1)}\,\frac{1}{\alpha}-\frac{\Gamma'(a)}{\Gamma(a)}\right)\,
 +\\
  &\sum_{k=0,k\ne r}^{\infty}\frac{(-1)^k}{k!}\,\zeta(-\alpha k-\gamma,a)\,x^k,
\end{split}
\end{equation}
if $-\frac{\gamma+1}{\alpha}\not\in\Z_+$ and
$-\frac{\gamma+1}{\alpha}=r\in\Z_+$ respectively. If $0<\alpha<1$,
then~\eqref{ass1} and~\eqref{ass2} turn into equalities whenever
$x>0$. If $\alpha=1$, then~\eqref{ass1} and~\eqref{ass2} are
equalities whenever $x\in(0,2\pi)$.
\end{theorem}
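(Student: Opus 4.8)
The plan is to use the Mellin transform (residue) method, which is the natural framework behind the approach attributed to Gel'fond in the introduction. First I would compute the Mellin transform of $f$ in the variable $x$. Since $\int_0^{\infty}e^{-(k+a)^\alpha x}x^{s-1}\,dx=\Gamma(s)(k+a)^{-\alpha s}$ for $\Re s>0$, interchanging summation and integration (justified by absolute convergence once $\Re s>\max\{0,\tfrac{\gamma+1}{\alpha}\}$) gives
\[
 \int_0^{\infty}f(x,a,\gamma,\alpha)\,x^{s-1}\,dx=\Gamma(s)\,{\zeta}(\alpha s-\gamma,a),
\]
with ${\zeta}$ the Hurwitz function of \eqref{gurv}, convergent for $\Re(\alpha s-\gamma)>1$. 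Inverting the transform yields the Mellin--Barnes representation
\[
 f(x,a,\gamma,\alpha)=\frac{1}{2\pi i}\int_{c-i\infty}^{c+i\infty}\Gamma(s)\,{\zeta}(\alpha s-\gamma,a)\,x^{-s}\,ds,\qquad c>\tfrac{\gamma+1}{\alpha}.
\]

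The asymptotics as $x\to+0$ are then obtained by shifting the contour to the left and collecting residues (moving from $\Re s=c$ to $\Re s=c'<c$ contributes $+\sum\mathrm{Res}$ for the poles crossed). The only poles of the integrand are $s=-k$, $k\in\Z_+$ (simple poles of $\Gamma$, residue $(-1)^k/k!$) and $s_0:=\tfrac{\gamma+1}{\alpha}$, where ${\zeta}(\alpha s-\gamma,a)$ inherits the simple pole of ${\zeta}(\,\cdot\,,a)$ at $1$, with residue $1/\alpha$ by the chain rule. Setting $\psi:=\Gamma'/\Gamma$: in Case~1, where $-\tfrac{\gamma+1}{\alpha}\notin\Z_+$, the point $s_0$ avoids every $-k$, all poles are simple, the residue at $s_0$ is $\tfrac1\alpha\Gamma\!\big(\tfrac{\gamma+1}{\alpha}\big)x^{-(\gamma+1)/\alpha}$ and that at $s=-k$ is $\tfrac{(-1)^k}{k!}{\zeta}(-\alpha k-\gamma,a)x^{k}$, which assembles exactly \eqref{ass1}. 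In Case~2, where $\tfrac{\gamma+1}{\alpha}=-r$ with $r\in\Z_+$, the $\zeta$-pole collides with the $\Gamma$-pole at $s=-r$, creating a double pole. Writing the local factors as
\[
 \Gamma(s)=\tfrac{(-1)^r}{r!}\Big(\tfrac{1}{s+r}+\psi(r+1)+\cdots\Big),\quad {\zeta}(\alpha s-\gamma,a)=\tfrac{1}{\alpha(s+r)}-\psi(a)+\cdots,\quad x^{-s}=x^{r}\big(1-(s+r)\ln x+\cdots\big),
\]
where the middle expansion is precisely \eqref{polus}, I would extract the coefficient of $(s+r)^{-1}$. The $\ln x$ term arises from pairing the $(s+r)^{-2}$ part with the linear part of $x^{-s}$, and the residue comes out as $\tfrac{(-1)^rx^r}{\Gamma(r+1)}\big(-\tfrac{\ln x}{\alpha}+\tfrac{\Gamma'(r+1)}{\Gamma(r+1)}\tfrac1\alpha-\tfrac{\Gamma'(a)}{\Gamma(a)}\big)$; the simple poles at $s=-k$, $k\ne r$, furnish the remaining sum. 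This is exactly \eqref{ass2}.

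For the asymptotic statement ($\sim$) it suffices to shift to a fixed line $\Re s=-N-\tfrac12$: the residues collected give the partial sum through order $x^N$, while on that line $|x^{-s}|=x^{N+1/2}$ and $\int|\Gamma\cdot{\zeta}|\,dt<\infty$, because the Stirling decay $|\Gamma(\sigma+it)|\asymp|t|^{\sigma-1/2}e^{-\pi|t|/2}$ dominates the merely polynomial growth of the Hurwitz function on vertical lines. Hence the remainder is $O(x^{N+1/2})$, proving \eqref{ass1} and \eqref{ass2} as $x\to+0$.

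The equality claims require pushing the contour all the way to $-\infty$, so that the residue series converges and the shifted integral vanishes. The decisive input is the size of ${\zeta}(-\alpha k-\gamma,a)$ as $k\to\infty$: the Hurwitz functional equation yields $|{\zeta}(-\alpha k-\gamma,a)|\asymp(2\pi)^{-\alpha k}\,\Gamma(1+\alpha k+\gamma)$ up to bounded trigonometric factors. Inserting this into the general term and applying Stirling, $\log\big|\tfrac{(-1)^k}{k!}{\zeta}(-\alpha k-\gamma,a)x^{k}\big|$ is governed by $(\alpha-1)k\ln k+k\ln\tfrac{x}{2\pi}+O(k)$. This is the main obstacle, and it is exactly where the three regimes separate: for $0<\alpha<1$ the term $(\alpha-1)k\ln k\to-\infty$ forces convergence for every $x>0$; for $\alpha=1$ the $k\ln k$ term drops out and convergence holds precisely when $x<2\pi$; for $\alpha>1$ the series diverges, leaving only the asymptotic statement. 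A parallel computation — integrating the same bound for $|\Gamma\cdot{\zeta}|$ along $\Re s=-N-\tfrac12$ against $|x^{-s}|=x^{N+1/2}$ — shows the remainder integral tends to $0$ in identically these ranges, turning \eqref{ass1} and \eqref{ass2} into genuine equalities for $0<\alpha<1$ (all $x>0$) and for $\alpha=1$ (all $x\in(0,2\pi)$). I expect the careful, uniform-in-$\sigma$ version of this last growth estimate, pinning down the constant $2\pi$ at the boundary $\alpha=1$, to be the technically hardest point.
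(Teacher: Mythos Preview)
Your proposal is correct and follows essentially the same approach as the paper: the Mellin--Barnes representation $f=\frac{1}{2\pi i}\int\Gamma(s)\zeta(\alpha s-\gamma,a)x^{-s}\,ds$, contour shifting with residue collection (including the double-pole computation at $s=-r$ via \eqref{polus}), and then the detailed Stirling/Hurwitz-functional-equation estimate of the remainder $I_n$ on $\Re s=-n-\tfrac12$ to obtain the equality for $0<\alpha\le1$. Your anticipation that the uniform growth estimate pinning down the constant $2\pi$ is the hard part is exactly right---the paper devotes most of its effort to this (and also handles $a>1$ separately via the reduction $\zeta(s,a)=\zeta(s,a_0)-\sum_{k=0}^{p-1}(k+a_0)^{-s}$).
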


The fact that the relations~\eqref{ass1} are equalities for all
$x>0$ if $a=1$, $0<\alpha<1$, was mentioned without proof
in~\cite{Gelfond}.

\begin{theorem}\label{th5}
Let $a>0$, $\gamma\in \R$, and let $\alpha>0$.  Then the following
asymptotic expansion holds:
\begin{equation}\label{ass3}
\widetilde{f}(x,a,\gamma,\alpha)\underset{x\to+0}{\sim}
\sum_{k=0}^{\infty}\frac{(-1)^k}{k!}\,\widetilde{\zeta}(-\alpha
k-\gamma,a)\,x^k\;.
\end{equation}
If $0<\alpha<1$, then the~\eqref{ass3} turns into an equality for
all $x>0$. If $\alpha=1$, then~\eqref{ass3} is an equality for all
$x\in(0,\pi)$.
\end{theorem}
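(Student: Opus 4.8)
The plan is to use a Mellin--Barnes representation together with residue calculus, in the spirit of Gel'fond's treatment of \eqref{f4}; the decisive simplification here is that, by \eqref{gurv1}, the function $\widetilde{\zeta}(s,a)$ is \emph{entire}, whereas the Hurwitz function carries a pole at $s=1$. First I would compute the Mellin transform of $\widetilde{f}$ in the variable $x$. Since $\int_{0}^{\infty}e^{-(k+a)^{\alpha}x}x^{s-1}\,dx=\Gamma(s)(k+a)^{-\alpha s}$ for $\Re s>0$, interchanging summation and integration---legitimate once $\Re(\alpha s-\gamma)>1$ secures absolute convergence---gives
\[
\int_{0}^{\infty}\widetilde{f}(x,a,\gamma,\alpha)\,x^{s-1}\,dx=\Gamma(s)\,\widetilde{\zeta}(\alpha s-\gamma,a),
\]
whence, by Mellin inversion with $c>(\gamma+1)/\alpha$,
\[
\widetilde{f}(x,a,\gamma,\alpha)=\frac{1}{2\pi i}\int_{c-i\infty}^{c+i\infty}\Gamma(s)\,\widetilde{\zeta}(\alpha s-\gamma,a)\,x^{-s}\,ds.
\]

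Next I would push the contour to the line $\Re s=-N-\tfrac12$. The integrand is holomorphic in the strip except at the simple poles $s=-k$ ($0\le k\le N$) of $\Gamma(s)$, whose residues $(-1)^{k}/k!$ produce exactly $\sum_{k=0}^{N}\frac{(-1)^{k}}{k!}\widetilde{\zeta}(-\alpha k-\gamma,a)\,x^{k}$; since $\widetilde{\zeta}$ is entire, no other poles intervene. On the shifted line $|x^{-s}|=x^{N+1/2}$, so the tail integral is $O(x^{N+1/2})$ as $x\to+0$. Its convergence and this bound follow from the exponential decay of $\Gamma$ along vertical lines ($|\Gamma(\sigma+it)|$ falls off like $e^{-\pi|t|/2}$), combined with the growth of $\widetilde{\zeta}$ on such lines read off from the Hurwitz functional equation; the product decays like $e^{-\pi(1+\alpha)|t|/2}$. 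This already proves the asymptotic expansion \eqref{ass3} for every $\alpha>0$.

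To upgrade \eqref{ass3} to an equality I would instead show the remainder tends to $0$ as $N\to\infty$ for fixed $x$, which reduces to convergence of the right-hand series. Writing $\widetilde{\zeta}(s,a)=2^{-s}\bigl(\zeta(s,\tfrac{a}{2})-\zeta(s,\tfrac{a+1}{2})\bigr)$ and inserting Hurwitz's formula, the two resulting cosines reinforce rather than cancel, and a short computation yields
\[
\widetilde{\zeta}(-\alpha k-\gamma,a)=\frac{2\,\Gamma(\alpha k+\gamma+1)}{\pi^{\,\alpha k+\gamma+1}}\cos\!\Bigl(\pi a-\tfrac{\pi}{2}(\alpha k+\gamma+1)\Bigr)+\text{(lower order)}.
\]
The essential feature is that the base is $\pi$, not $2\pi$: this is the arithmetic fingerprint of the alternation. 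By Stirling the $k$-th term of \eqref{ass3} then behaves like $\Gamma(\alpha k+\gamma+1)/\bigl(k!\,\pi^{\alpha k}\bigr)\,x^{k}$. For $0<\alpha<1$ the ratio $\Gamma(\alpha k+\gamma+1)/k!$ decays faster than any geometric sequence, so the series converges for every $x>0$; for $\alpha=1$ it is asymptotic to $k^{\gamma}(x/\pi)^{k}$, giving radius of convergence exactly $\pi$. Matching the leading behaviour of the shifted integral with that of the omitted tail of the series then closes the equality on $x>0$ and on $x\in(0,\pi)$ respectively.

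The step I expect to be hardest is the uniform control of $\widetilde{\zeta}$, which is needed twice: for the vertical-line estimate that legitimises the contour shift and the $O$-bound, and for the sharp point asymptotics that fix the radius at $\pi$. Both rest on the Hurwitz functional equation, and the delicate part is tracking the precise base ($\pi$ versus $2\pi$) through the even/odd decomposition, since it is exactly the cosine cancellation pattern that separates \eqref{ass3} from the non-alternating expansion of Theorem~\ref{th4}.
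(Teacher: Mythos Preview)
Your approach is correct in outline, but it is not the route the paper takes. The paper's proof of Theorem~\ref{th5} is one line: it invokes Theorem~\ref{th4} together with the elementary identity
\[
\widetilde{f}(x,a,\gamma,\alpha)=f(x,a,\gamma,\alpha)-2^{\gamma+1}\,f\!\left(2^{\alpha}x,\tfrac{a+1}{2},\gamma,\alpha\right),\qquad x>0.
\]
Subtracting the two expansions from Theorem~\ref{th4}, the non-integer power $x^{-(\gamma+1)/\alpha}$ (or the logarithmic term when $-(\gamma+1)/\alpha\in\Z_+$) cancels identically, and the surviving power-series coefficients combine via~\eqref{gurv1} to give $\widetilde{\zeta}(-\alpha k-\gamma,a)$. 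The equality statements are then immediate: for $0<\alpha<1$ both constituent expansions are exact on $(0,\infty)$, and for $\alpha=1$ the second one is exact only for $2x\in(0,2\pi)$, which is precisely $x\in(0,\pi)$. Thus the base~$\pi$ falls out of the rescaling $x\mapsto 2^{\alpha}x$ with no fresh estimate needed.

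Your direct Mellin--Barnes argument, exploiting that $\widetilde{\zeta}(s,a)$ is entire, is a legitimate alternative and essentially reruns the machinery of Theorem~\ref{th4} with $\widetilde{\zeta}$ in place of $\zeta$. It is more self-contained but also more laborious: you must redo the vertical-line bounds and the Stirling--Hurwitz calculus from scratch. One point to tighten: showing that the power series has radius~$\pi$ does not by itself prove the shifted integral $I_{N}\to0$; the honest argument estimates $I_{N}$ directly (as the paper does in \eqref{28eyler}--\eqref{29eyler} for $f$) and lets the equality follow. Your ``matching the leading behaviour'' sentence gestures at this but would need to be replaced by an explicit bound on $I_{N}$ to be rigorous.
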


In what follows we will consider the functional series of the form
\begin{equation}\label{g2}
   g(x,\agam):=\sum_{k=0}^{\infty}\frac{(k+a)^\gamma}{(x(k+a)^\alpha+1)^\mu}
  \;,\;  \mu>\max\left\{\frac{\gamma+1}{\alpha};0\right\}\,,\,x> 0\,,
\end{equation}
\begin{equation}\label{g1}
  \widetilde{g}(x,\agam):=\sum_{k=0}^{\infty}\frac{(-1)^k(k+a)^\gamma}{(x(k+a)^\alpha+1)^\mu}
   \;,\; \mu>\max\left\{\frac{\gamma}{\alpha};0\right\}\,,\,x> 0\,,
\end{equation}
with parameters $a>0$, $\gamma\in \R$, $\alpha>0$ and $\mu>0$.

\begin{theorem}\label{thg1}
Suppose that $a>0$, $\gamma\in \R$, $\alpha>0$ and
$\mu>\max\left\{\frac{\gamma+1}{\alpha};0\right\}$. Then the
following asymptotic expansions hold:
\begin{equation}\label{assg1}
\begin{split}
&g(x,\agam)\underset{x\to+0}{\sim}
  \\
&\frac{\Gamma\left(\frac{\gamma+1}{\alpha}\right)\;\Gamma
\left(\mu-\frac{\gamma+1}{\alpha}\right)}{\alpha\,\Gamma(\mu)}\,
 x^{-\frac{\gamma+1}{\alpha}}+
      \sum_{k=0}^{\infty}\frac{(-1)^k}{k!}\,\frac{\Gamma(\mu+k)}{\Gamma(\mu)}\,\zeta(-\alpha
      k-\gamma,a)\,x^k\;,
      \end{split}
      \end{equation}
      \begin{equation}\label{assg2}
\begin{split}
&g(x,\agam)\underset{x\to+0}{\sim}
  \\
&\frac{\Gamma(\mu+r)(-1)^rx^r}{\Gamma(\mu)\Gamma(r+1)}\left(-\frac{\ln
x}{\alpha}+\frac{\Gamma'(r+1)}{\alpha\,\Gamma(r+1)}-\frac{\Gamma'(a)}{\Gamma(a)}-\frac{\Gamma'(\mu+r)}{\alpha\,\Gamma(\mu+r)}\right)
 +\\
      &\sum_{k=0,k\ne r}^{\infty}\frac{(-1)^k}{k!}\,\frac{\Gamma(\mu+k)}{\Gamma(\mu)}\,\zeta(-\alpha k-\gamma,a)\,x^k,
\end{split}
\end{equation}
for $-\frac{\gamma+1}{\alpha}\not\in\Z_+$  and
$-\frac{\gamma+1}{\alpha}=r\in\Z_+$ respectively.
\end{theorem}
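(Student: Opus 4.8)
The plan is to deduce Theorem~\ref{thg1} from Theorem~\ref{th4} through the subordination formula
\[
\frac{1}{z^\mu}=\frac{1}{\Gamma(\mu)}\int_0^\infty t^{\mu-1}e^{-zt}\,dt\,,\qquad z>0,\ \mu>0,
\]
applied to each summand of~\eqref{g2} with $z=x(k+a)^\alpha+1$. Since every term is nonnegative, Tonelli's theorem permits interchanging the summation with the integration, which yields the representation
\[
g(x,\agam)=\frac{1}{\Gamma(\mu)}\int_0^\infty t^{\mu-1}e^{-t}\,f(xt,a,\gamma,\alpha)\,dt,
\]
where $f$ is the function in~\eqref{f4}. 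Near $t=0$ the most singular part of $f(xt,a,\gamma,\alpha)$ is of order $(xt)^{-(\gamma+1)/\alpha}$ when $\gamma+1>0$ (and $f$ is bounded otherwise), so local integrability of the integrand amounts exactly to $\mu>\max\{\tfrac{\gamma+1}{\alpha},0\}$, while $e^{-t}$ secures convergence at infinity. Thus the admissibility hypotheses are precisely those making the representation meaningful, which strongly suggests this is the right route.

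I would then insert the asymptotic expansion of $f(xt,a,\gamma,\alpha)$ supplied by Theorem~\ref{th4} and integrate term by term, using the elementary evaluations
\[
\int_0^\infty t^{\mu-1+k}e^{-t}\,dt=\Gamma(\mu+k),\qquad
\int_0^\infty t^{\mu-1-(\gamma+1)/\alpha}e^{-t}\,dt=\Gamma\!\left(\mu-\tfrac{\gamma+1}{\alpha}\right).
\]
In the generic case $-\tfrac{\gamma+1}{\alpha}\notin\Z_+$ the leading power term of~\eqref{ass1} is converted into $\tfrac{\Gamma((\gamma+1)/\alpha)\,\Gamma(\mu-(\gamma+1)/\alpha)}{\alpha\,\Gamma(\mu)}\,x^{-(\gamma+1)/\alpha}$ and the $k$-th term into $\tfrac{(-1)^k}{k!}\,\tfrac{\Gamma(\mu+k)}{\Gamma(\mu)}\,\zeta(-\alpha k-\gamma,a)\,x^k$, which is precisely~\eqref{assg1}. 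In the exceptional case $-\tfrac{\gamma+1}{\alpha}=r\in\Z_+$ the only new ingredient is the logarithmic term of~\eqref{ass2}: writing $\ln(xt)=\ln x+\ln t$ and invoking $\int_0^\infty t^{\mu-1+r}e^{-t}\ln t\,dt=\Gamma'(\mu+r)$ contributes exactly the extra summand $-\tfrac{\Gamma'(\mu+r)}{\alpha\,\Gamma(\mu+r)}$ inside the bracket, together with the prefactor $\tfrac{\Gamma(\mu+r)}{\Gamma(\mu)\Gamma(r+1)}$, reproducing~\eqref{assg2}. Hence the coefficients match automatically, and the real content of the proof is the legitimacy of the term-by-term integration.

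That justification is the main obstacle, because the expansion in Theorem~\ref{th4} holds as the argument tends to $0$, whereas in the integral $xt$ sweeps across all of $(0,\infty)$. I would handle it by a Watson-lemma-type splitting at $t=x^{-1/2}$. On $0<t<x^{-1/2}$ the argument satisfies $xt<x^{1/2}\to0$, so for each $N$ the remainder $R_N(xt)=O\!\left((xt)^{N+1}\right)$ of Theorem~\ref{th4} applies uniformly, and
\[
\int_0^{x^{-1/2}}t^{\mu-1}e^{-t}\,|R_N(xt)|\,dt\le C\,x^{N+1}\int_0^\infty t^{\mu+N}e^{-t}\,dt=O\!\left(x^{N+1}\right);
\]
the tails $\int_{x^{-1/2}}^\infty$ of the individual expansion terms, together with the whole contribution of $t>x^{-1/2}$ (where $f(xt,a,\gamma,\alpha)$ is at most polynomially large while $e^{-t}$ is superexponentially small), are $O(e^{-x^{-1/2}/2})$ and hence negligible beyond all orders. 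This establishes~\eqref{assg1} and~\eqref{assg2} as genuine asymptotic expansions. I note finally that when $0<\alpha<1$ Theorem~\ref{th4} furnishes an exact equality for $f$, so dominated convergence then makes the term-by-term integration exact and the expansions become equalities for all $x>0$.
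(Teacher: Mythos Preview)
Your argument is correct and takes a route genuinely different from the paper's. The paper does not deduce Theorem~\ref{thg1} from Theorem~\ref{th4}; instead it repeats the Mellin--Barnes method directly for $g$: starting from the inversion formula~\eqref{melin2} and summing over $k$ one obtains
\[
g(x,\agam)=\frac{1}{2\pi i}\int_{\beta-i\infty}^{\beta+i\infty}G(s)\,ds\,,\qquad G(s)=\frac{\Gamma(\mu-s)}{\Gamma(\mu)}\,\Gamma(s)\,x^{-s}\zeta(\alpha s-\gamma,a)\,,
\]
and shifting the contour to the left collects the same residues as for $F$ in the proof of Theorem~\ref{th4}, now multiplied by the analytic factor $\Gamma(\mu-s)/\Gamma(\mu)$ (whose Taylor expansion at $s=-r$ supplies the extra $-\Gamma'(\mu+r)/(\alpha\,\Gamma(\mu+r))$ in the degenerate case); the remainder is again a vertical-line integral bounded via~\eqref{13eyler} and~\eqref{14eyler}. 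Your Laplace-subordination route is more modular: once Theorem~\ref{th4} is in hand no further contour work is needed, and the appearance of the factors $\Gamma(\mu+k)/\Gamma(\mu)$ and $\Gamma\bigl(\mu-\frac{\gamma+1}{\alpha}\bigr)/\Gamma(\mu)$ becomes transparent through the Euler integrals. The paper's approach in return yields an explicit remainder $I_n=O(x^{n+1/2})$ valid for every $x>0$, not just small $x$, which makes the asymptotic statement immediate without any splitting of the integration range. Your closing observation about exact equality for $0<\alpha<1$ goes beyond what Theorem~\ref{thg1} asserts; the required absolute summability of $\sum_k|\zeta(-\alpha k-\gamma,a)|\,\Gamma(\mu+k)\,x^k/k!$ does indeed hold for $0<\alpha<1$, as one checks from Stirling and the Hurwitz formula~\eqref{10eyler}.
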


\begin{theorem}\label{thg2}
Let $a>0$, $\gamma\in \R$, $\alpha>0$, and let
$\mu>\max\left\{\frac{\gamma}{\alpha};0\right\}$. Then the
following asymptotic expansion holds:
\begin{equation}\label{assg3}
\widetilde{g}(x,\agam)\underset{x\to+0}{\sim}
\sum_{k=0}^{\infty}\frac{(-1)^k}{k!}\,\frac{\Gamma(\mu+k)}{\Gamma(\mu)}\,\widetilde{\zeta}(-\alpha
k-\gamma,a)\,x^k\;.
\end{equation}
\end{theorem}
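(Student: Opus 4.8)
The plan is to reduce Theorem~\ref{thg2} to the expansion of $\widetilde{f}$ already obtained in Theorem~\ref{th5}, by means of a Gamma-integral representation that converts the power-type denominator into an exponential. The starting point is the elementary identity
\[
\frac{1}{(1+t)^\mu}=\frac{1}{\Gamma(\mu)}\int_0^{\infty}s^{\mu-1}e^{-s(1+t)}\,ds,\qquad \mu>0,\ t>-1,
\]
applied with $t=x(k+a)^\alpha\ge 0$. Inserting this into~\eqref{g1} and interchanging summation and integration would give
\[
\widetilde{g}(x,\agam)=\frac{1}{\Gamma(\mu)}\int_0^{\infty}s^{\mu-1}e^{-s}\,\widetilde{f}(sx,a,\gamma,\alpha)\,ds,
\]
in which the inner series is exactly~\eqref{f3} evaluated at the argument $sx$. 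The interchange of sum and integral is the first point requiring care: under the sole hypothesis $\mu>\gamma/\alpha$ the series is only conditionally convergent, so Tonelli's theorem is not directly available. I would handle this by grouping consecutive terms in pairs and applying the mean value theorem to $u\mapsto u^\gamma e^{-u^\alpha sx}$; the resulting paired series is absolutely convergent after integration precisely when $\mu>\gamma/\alpha$, which is what legitimizes Fubini's theorem on the grouped series.

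Next I would invoke Theorem~\ref{th5}. Fix $N\in\Z_+$ and write $\widetilde{f}(y,a,\gamma,\alpha)=P_N(y)+R_N(y)$, where
\[
P_N(y)=\sum_{k=0}^{N}\frac{(-1)^k}{k!}\,\widetilde{\zeta}(-\alpha k-\gamma,a)\,y^k
\]
is the partial sum of~\eqref{ass3} and $R_N(y)=O\bigl(y^{N+1}\bigr)$ as $y\to+0$. Substituting $y=sx$ and using $\int_0^{\infty}s^{\mu+k-1}e^{-s}\,ds=\Gamma(\mu+k)$, the polynomial part integrates termwise to
\[
\frac{1}{\Gamma(\mu)}\int_0^{\infty}s^{\mu-1}e^{-s}P_N(sx)\,ds=\sum_{k=0}^{N}\frac{(-1)^k}{k!}\,\frac{\Gamma(\mu+k)}{\Gamma(\mu)}\,\widetilde{\zeta}(-\alpha k-\gamma,a)\,x^k,
\]
which is exactly the asserted sum~\eqref{assg3} truncated at $k=N$. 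It then remains only to show that the remainder integral contributes $o(x^N)$.

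The main obstacle, and the heart of the proof, is the estimate of the remainder integral $\frac{1}{\Gamma(\mu)}\int_0^{\infty}s^{\mu-1}e^{-s}R_N(sx)\,ds$. The bound $R_N(y)=O(y^{N+1})$ from Theorem~\ref{th5} is valid only for small $y$, whereas $s$ ranges over all of $(0,\infty)$, so I need a second, global bound. Since $\widetilde{f}(y)$ is bounded (indeed decays) for $y\ge 1$ and $P_N$ is a polynomial of degree $N$, one has $|R_N(y)|\le C\,y^{N}$ for $y\ge 1$. I would then split the integral at $s=1/x$: on the range $s\le 1/x$ the argument $sx$ is small, so $|R_N(sx)|\le C(sx)^{N+1}$ and the contribution is $O(x^{N+1})$; on the range $s>1/x$ the large-argument bound gives a contribution bounded by $C\,x^N\int_{1/x}^{\infty}s^{\mu+N-1}e^{-s}\,ds$, and this tail integral tends to $0$ as $x\to+0$, so this contribution is also $o(x^N)$. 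Combining the two pieces yields $o(x^N)$, and since $N$ is arbitrary this establishes the asymptotic expansion~\eqref{assg3}. The delicate points throughout are the two-sided control of $R_N$ and the conditional-convergence justification in the first step; once these are in place the computation of the coefficients is the routine Gamma-integral evaluation displayed above.
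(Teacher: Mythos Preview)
Your argument is correct, but it proceeds along a genuinely different route from the paper's. The paper proves Theorem~\ref{thg2} by the same Mellin--transform/residue technique used for Theorem~\ref{thg1}: starting from the Mellin inversion formula~\eqref{melin2} one obtains the contour-integral representation
\[
\widetilde{g}(x,\agam)=\frac{1}{2\pi i}\int_{\beta-i\infty}^{\beta+i\infty}\frac{\Gamma(\mu-s)\Gamma(s)}{\Gamma(\mu)}\,x^{-s}\,\widetilde{\zeta}(\alpha s-\gamma,a)\,ds,
\qquad \max\Bigl\{0,\tfrac{\gamma}{\alpha}\Bigr\}<\beta<\mu,
\]
and then shifts the line of integration to the left, picking up residues at the simple poles $s=-k$ of $\Gamma(s)$; since $\widetilde{\zeta}(\cdot,a)$ is entire there is no extra pole at $s=(\gamma+1)/\alpha$, which is exactly why only the power series~\eqref{assg3} appears. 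The remainder integral over $\Re s=-\sigma_n$ is bounded as in~\eqref{b9}. The paper also notes that under the stronger hypothesis $\mu>\max\{0,(\gamma+1)/\alpha\}$ one may instead reduce to Theorem~\ref{thg1} via the identity $\widetilde{g}(x,a,\ldots)=g(x,a,\ldots)-2^{\gamma+1}g\bigl(2^{\alpha}x,\tfrac{a+1}{2},\ldots\bigr)$.

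Your method stays entirely on the real line: the Laplace representation $\widetilde g(x,\agam)=\frac{1}{\Gamma(\mu)}\int_0^\infty s^{\mu-1}e^{-s}\widetilde f(sx,a,\gamma,\alpha)\,ds$ feeds the already-established expansion~\eqref{ass3} into a Gamma-weighted average, and the coefficients emerge from $\int_0^\infty s^{\mu+k-1}e^{-s}\,ds=\Gamma(\mu+k)$. This is more elementary in that no new contour estimates are needed once Theorem~\ref{th5} is in hand; the price is the extra bookkeeping you describe (the pairing/mean-value argument needed for Fubini under the borderline hypothesis $\mu>\gamma/\alpha$, and the two-regime control of $R_N$ via the split at $s=1/x$). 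The paper's route, by contrast, is self-contained and does not invoke Theorem~\ref{th5}, treating $\widetilde g$ on the same footing as $g$ via complex analysis. Both arrive at the same coefficients; your approach makes the passage from~\eqref{ass3} to~\eqref{assg3} transparent, while the paper's makes the structural parallel between Theorems~\ref{thg1} and~\ref{thg2} more visible.
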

Note that Theorems~\ref{th4}, \ref{th5}, \ref{thg1} and~\ref{thg2}
are proved by using the technique of~\cite{Gelfond}.
In~\S~\ref{subsec4} these theorems are applied to obtain precise
inequalities for Mathieu series.

\section{Precise inequalities for Mathieu series}
\label{subsec4}

Consider the following functional series with parameters $a>0$,
$\gamma\in \R$, $\alpha>0$ and $\mu>0$:
 \begin{equation}\label{f2}
 S(x,\agam):=\sum_{k=0}^{\infty}\frac{(k+a)^\gamma}{((k+a)^\alpha+x)^\mu}
 \;,\;  \mu>\max\left\{\frac{\gamma+1}{\alpha};0\right\}\,,\,x\ge 0\,,
 \end{equation}
       \begin{equation}\label{f1}
      \widetilde{S}(x,\agam):=\sum_{k=0}^{\infty}\frac{(-1)^k(k+a)^\gamma}{((k+a)^\alpha+x)^\mu}
      \;,\;
      \mu>\max\left\{\frac{\gamma}{\alpha};0\right\}\,,\,x\ge 0\,.
      \end{equation}

As is customary in recent years,~\eqref{f2} and~\eqref{f1} are
said to be a generalized Mathieu series and a generalized
alternating Mathieu series respectively. In 1890, \'{E}mile
Leonard Mathieu~\cite{Mathieu} introduced the hypothesis on the
validity of the following inequality:
\begin{equation}\label{Mathieu}
S(x,1,1,2,2)=\sum_{k=0}^{\infty}\frac{k+1}{((k+1)^2+x)^2}<\frac{1}{2x}
\;, x>0\;.
\end{equation}
Different proofs of the inequality~\eqref{Mathieu} were published
in the papers due to Berg~\cite{Berg}, van der Corput,
Heflinger~\cite{Corput} and Makai~\cite{Makai} in 1952--1957. In
the Makai's paper~\cite{Makai} there were proved the inequalities
\begin{equation}\label{Mak}
\frac{1}{2(q+x)}<\sum_{k=0}^{\infty}\frac{k+1}{((k+1)^2+x)^2}<\frac{1}{2(p+x)}
\;, x>0\;,
\end{equation}
where $q=\frac{1}{2}$ and $p=0$. Thus, the following natural
problem arises: to find a maximal possible $p$ and a minimal
possible $q$ satisfying the inequality~\eqref{Mak}. In 1982,
Elbert~\cite{Elbert} conjectured that one can take
$q=\frac{1}{2\zeta(3)}$ in~\eqref{Mak}, where $\zeta(s)$ stands
for the Riemann zeta-function. In 1998, Alzer, Brenner and
Ruehr~\cite{Alzer} proved that $q=\frac{1}{2\zeta(3)}$ and
$p=\frac{1}{6}$ are sharp constants in the inequality~\eqref{Mak}.

In 2008, it was proved by the author~\cite{Zast2009_1} that for
any $\mu>1$ and $a\ge 1$ there exist positive constants $m(\mu,a)$
and $M(\mu,a)$ such that the inequality
     \begin{equation}\label{ner2}
  \frac{1}{2(\mu-1)(q+x)^{\mu-1}}
  \le \sum_{k=0}^{\infty}\frac{(k+a)}{((k+a)^2+x)^{\mu}}\le
  \frac{1}{2(\mu-1)(p+x)^{\mu-1}}
    \end{equation}
is fulfilled for every $x>0$ if and only if $0\le p\le m(\mu,a)$
and $q\ge M(\mu,a)$. In this case, for any fixed $a\ge 1$ the
functions $m(\mu,a)$ and $M(\mu,a)$ decrease and increase
respectively on $\mu\in(1,+\infty)$, and for all $a\ge 1$, $\mu>1$
the following inequalities hold:
\begin{equation*}
\begin{split}
 &a^2-a<m(\infty,a)\le m(\mu,a)\le a^2-a+\frac 16
 \\& a^2-a+\frac 14<M(\mu,a)<M(\infty,a)=a^2\;.
\end{split}
\end{equation*}
It was also proved that $m(\mu,1)=\frac 16$, $\mu\in(1,3]$. Thus,
{\it if $a\ge 1$ then inequality~\eqref{ner2} is valid for all
$\mu>1$ if and only if $0\le p\le m(\infty,a)$ and $q\ge
M(\infty,a)=a^2$.} The right-hand side inequality in~\eqref{ner2}
was proved for $a=1$, $p=0$ and $\mu>1$ by
Diananda~\cite{Diananda} in 1980. A big list related to this
matter can be found in~\cite{Hoorfar and Qi}.

If $a>0$, $\gamma\in \R$, $\alpha>0$ and
$\mu>\max\left\{\frac{\gamma+1}{\alpha};0\right\}$, then by
Theorem~\ref{thg1} we have
 \begin{equation}\label{ass}
  S(x,\agam)\underset{x\to+\infty}{\sim}\left\{
  \begin{array}{lcr}
   \frac{\Gamma\left(\frac{\gamma+1}{\alpha}\right)}{\alpha}\cdot
    \frac{\Gamma\left(\mu-\frac{\gamma+1}{\alpha}\right)}{\Gamma(\mu)}\cdot
   x^{\frac{\gamma+1}{\alpha}-\mu}
    &,&\gamma+1>0\;,\\
    \frac{1}{\alpha}\cdot x^{-\mu}\ln x
    &,&\gamma+1=0\;,\\
    \zeta(-\gamma,a)\,x^{-\mu}
    &,&\gamma+1<0\;.
      \end{array}
      \right.
      \end{equation}
Hence the following problem is natural. Assume that $a>0$,
$\gamma+1>0$, $\alpha>0$ and $\mu_0\ge\frac{\gamma+1}{\alpha}$.
For which $q\ge0$, $p\ge0$, $A\in\R$ and $B>0$ the inequality
 \begin{equation}\label{n2}
 \frac{B\cdot\Gamma\left(\mu-\frac{\gamma+1}{\alpha}
   \right)}{\Gamma(\mu)(q+x)^{\mu-\frac{\gamma+1}{\alpha}}}\le
   S(x,\agam)\le
  \frac{A\cdot\Gamma\left(\mu-\frac{\gamma+1}{\alpha}
  \right)}{\Gamma(\mu)(p+x)^{\mu-\frac{\gamma+1}{\alpha}}}
 \end{equation}
holds for any $\mu>\mu_0$ and $x>0$? This problem is completely
solved in Theorem~\ref{th1}, and an analogous problem is solved in
Theorem~\ref{th2} in the case of $\gamma+1<0$.
      \begin{theorem}\label{th1}
Suppose that $a>0$, $\gamma+1>0$, $\alpha>0$,
$\mu_0\ge\frac{\gamma+1}{\alpha}$, $q\ge0$, $p\ge0$, $A\in\R$ and
$B>0$. Then the inequality~\eqref{n2} is valid for all $\mu>\mu_0$
and $x>0$ if and only if $0\le p< a^\alpha\le q$, $A\ge
A_p(a,\gamma,\alpha)$, $0<B\le B_q(a,\gamma,\alpha)$, where
      \begin{equation}
      \begin{split}
&A_p(a,\gamma,\alpha):=\sup_{x>0}e^{p\,x}x^{\frac{\gamma+1}{\alpha}}f(x,a,\gamma,\alpha)\;,
 \\&
B_q(a,\gamma,\alpha):=\inf_{x>0}e^{q\,x}x^{\frac{\gamma+1}{\alpha}}f(x,a,\gamma,\alpha)\,.
      \end{split}
      \end{equation}
In this case, the inequality~\eqref{n2} is strict for all $x>0$.
If $p>0$, it is also strict at $x=0$. Moreover,
$A_p(a,\gamma,\alpha)<+\infty$ if and only if $p<a^\alpha$, and
$B_q(a,\gamma,\alpha)>0$ if and only if $q\ge a^\alpha$. If $a\ge
1$ then $A_p(a,1,2)=B_q(a,1,2)=\frac 12$ for all $q\ge a^2$ and
$0\le p\le m(\infty,a)$; in particular, this is valid for all
$p\in[0,a^2-a]$.
      \end{theorem}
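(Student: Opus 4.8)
The plan is to reduce the two-sided estimate~\eqref{n2} for $S(x,\agam)$ to pointwise estimates for the one-variable function $f(t,a,\gamma,\alpha)$ by means of a subordination formula, and then to extract both implications from elementary properties of $f$. First I would record
\[
\frac{1}{((k+a)^\alpha+x)^\mu}=\frac{1}{\Gamma(\mu)}\int_0^{+\infty}t^{\mu-1}e^{-t((k+a)^\alpha+x)}\,dt,\qquad\mu>0,
\]
and sum over $k$ (all terms positive, so Tonelli applies) to get
\[
S(x,\agam)=\frac{1}{\Gamma(\mu)}\int_0^{+\infty}t^{\mu-1}e^{-tx}f(t,a,\gamma,\alpha)\,dt.
\]
Since $\mu>\mu_0\ge\frac{\gamma+1}{\alpha}$, one also has $\int_0^{+\infty}t^{\mu-\frac{\gamma+1}{\alpha}-1}e^{-(p+x)t}\,dt=\Gamma\left(\mu-\frac{\gamma+1}{\alpha}\right)(p+x)^{-\left(\mu-\frac{\gamma+1}{\alpha}\right)}$, so, writing $\psi_p(t):=e^{pt}t^{\frac{\gamma+1}{\alpha}}f(t,a,\gamma,\alpha)$, the upper estimate in~\eqref{n2} becomes
\[
\int_0^{+\infty}t^{\mu-\frac{\gamma+1}{\alpha}-1}e^{-(p+x)t}\bigl(\psi_p(t)-A\bigr)\,dt\le 0,
\]
required for all $\mu>\mu_0$ and $x>0$, and symmetrically the lower estimate becomes the reversed inequality with $\psi_q$ and $B$.

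The sufficiency is then immediate: the definitions $A_p=\sup_t\psi_p$ and $B_q=\inf_t\psi_q$ give the pointwise bounds $B_q\,e^{-qt}t^{-\frac{\gamma+1}{\alpha}}\le f(t,a,\gamma,\alpha)\le A_p\,e^{-pt}t^{-\frac{\gamma+1}{\alpha}}$, which upon substitution into the integral representation yield~\eqref{n2} with $A=A_p$, $B=B_q$, hence with any $A\ge A_p$ and $0<B\le B_q$. To see which constants are admissible I would analyse $\psi_p$ on $(0,+\infty)$: Theorem~\ref{th4} (namely~\eqref{ass1}) gives $\psi_p(t)\to\frac{1}{\alpha}\Gamma\left(\frac{\gamma+1}{\alpha}\right)>0$ as $t\to0^+$, while the $k=0$ summand dominates, so $f(t,a,\gamma,\alpha)\sim a^\gamma e^{-a^\alpha t}$ and $\psi_p(t)\sim a^\gamma t^{\frac{\gamma+1}{\alpha}}e^{(p-a^\alpha)t}$ as $t\to+\infty$. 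As $\psi_p$ is continuous and strictly positive, this yields $A_p<+\infty\iff p<a^\alpha$ and $B_q>0\iff q\ge a^\alpha$ (at $q=a^\alpha$ the weight tends to $+\infty$, so the infimum remains the positive boundary value at $0^+$).

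For necessity I would use two ingredients. The constraint $p<a^\alpha$ follows by bounding $S$ below by its $k=0$ term, $S(x,\agam)\ge a^\gamma(a^\alpha+x)^{-\mu}$, and combining with the upper estimate in~\eqref{n2}: this forces $a^\gamma\frac{\Gamma(\mu)}{\Gamma(\mu-\frac{\gamma+1}{\alpha})}\cdot\frac{(p+x)^{\mu-\frac{\gamma+1}{\alpha}}}{(a^\alpha+x)^{\mu}}\le A$; since $\frac{\Gamma(\mu)}{\Gamma(\mu-\frac{\gamma+1}{\alpha})}\sim\mu^{\frac{\gamma+1}{\alpha}}$ and $\frac{p+x}{a^\alpha+x}\ge1$ whenever $p\ge a^\alpha$, the left side tends to $+\infty$ as $\mu\to+\infty$ at fixed $x>0$, so no finite $A$ can work and $p<a^\alpha$ is forced. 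Given $p<a^\alpha$ the weight $\psi_p$ is bounded, and the normalized kernel $\frac{(p+x)^{\mu-\frac{\gamma+1}{\alpha}}}{\Gamma\left(\mu-\frac{\gamma+1}{\alpha}\right)}\,t^{\mu-\frac{\gamma+1}{\alpha}-1}e^{-(p+x)t}$ is a Gamma density with mean $\frac{\mu-\frac{\gamma+1}{\alpha}}{p+x}$ and variance $\frac{\mu-\frac{\gamma+1}{\alpha}}{(p+x)^2}$; fixing any $t_0>0$ by the choice $p+x=\frac{\mu-\frac{\gamma+1}{\alpha}}{t_0}$ and letting $\mu\to+\infty$ concentrates it at $t_0$, so the averaged inequality gives $\psi_p(t_0)\le A$, whence $A\ge A_p$. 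The same concentration applied to the lower estimate at large $t_0$ shows that $q<a^\alpha$ would force $B\le\psi_q(t_0)\to0$, impossible for $B>0$; hence $q\ge a^\alpha$ and, by concentration at an arbitrary $t_0$, $B\le B_q$. Strictness holds because $\psi_p\not\equiv A$ — otherwise $f$ would equal a single term $A\,e^{-pt}t^{-(\gamma+1)/\alpha}$, which is incompatible with its series form — so the integral is strictly negative for every admissible $\mu,x$; the boundary case $x=0$, $p>0$ is treated separately via $S(0,\agam)=\zeta(\alpha\mu-\gamma,a)$.

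Finally, for the identification $A_p(a,1,2)=B_q(a,1,2)=\frac12$ note that here $\frac{\gamma+1}{\alpha}=1$, so $\psi_p(t)=e^{pt}\,t\,f(t,a,1,2)\to\frac12$ as $t\to0^+$ by~\eqref{ass1}; hence it suffices to prove $e^{pt}\,t\,f(t,a,1,2)\le\frac12\le e^{qt}\,t\,f(t,a,1,2)$ for all $t>0$ when $0\le p\le m(\infty,a)$ and $q\ge a^2$, since then both the supremum and the infimum are attained as the common boundary value $\frac12$. Using $\frac{e^{-ct}}{2t}=\int_{\sqrt{c}}^{+\infty}u\,e^{-u^2t}\,du$ and comparing with $\sum_{k\ge0}(k+a)e^{-(k+a)^2t}$, both inequalities reduce to sum-versus-integral estimates for $u\mapsto u\,e^{-u^2t}$, whose monotonicity and convexity I would exploit; the simpler range $p\in[0,a^2-a]$ admits an elementary comparison, whereas the sharp threshold $p=m(\infty,a)$ and the endpoint $q=a^2$ rely on the sharp Mathieu constants recorded in~\eqref{ner2}. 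I expect this last identification — matching the supremum and infimum to $m(\infty,a)$ and $a^2$ exactly — to be the main obstacle, as it is the one place where the soft arguments above do not suffice and the fine structure of the sharp constants must be used.
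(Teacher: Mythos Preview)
Your proposal is correct and rests on the same integral representation the paper uses (relation~\eqref{c14}). The substantive difference is in how you extract the \emph{necessity} direction. The paper observes that
\[
(-1)^k\frac{d^{\,k}}{dx^k}\,\varphi(x,a,\gamma,\alpha,\mu,c,p)=\frac{\Gamma(\mu+k)}{\Gamma(\mu)}\,\varphi(x,a,\gamma,\alpha,\mu+k,c,p),
\]
so the upper bound in~\eqref{n2} holding for \emph{all} $\mu>\mu_0$ forces $\varphi(\cdot,\mu)$ to be completely monotone; the Bernstein--Hausdorff--Widder theorem then says the density in the Laplace representation is nonnegative, i.e.\ $\psi_p(t)\le A$ pointwise, and symmetrically $\psi_q(t)\ge B$. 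Your route --- concentrating the Gamma kernel at an arbitrary $t_0$ by sending $\mu\to\infty$ along $p+x=(\mu-\frac{\gamma+1}{\alpha})/t_0$ --- recovers the same pointwise bounds without invoking BHW, which is more elementary. The one place to be careful is the lower estimate when $q\ge a^\alpha$: there $\psi_q$ is unbounded, so passing to the limit in $\mathbb{E}[\psi_q(T_\nu)]\to\psi_q(t_0)$ needs a short uniform-integrability check (e.g.\ $\psi_q(t)\le M e^{Ct}$ together with $\mathbb{E}[e^{CT_\nu}]\to e^{Ct_0}$). The BHW route bypasses this entirely; that is the trade-off.

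For strictness, the paper isolates as Lemma~\ref{le} that $x^\beta e^{px}f(x,a,\gamma,\alpha)$ can never be identically constant; your ``incompatible with its series form'' is the same content. For the identification $A_p(a,1,2)=B_q(a,1,2)=\tfrac12$ the paper proceeds exactly as you anticipate in your closing sentence: it does not attempt a direct sum-versus-integral comparison but simply quotes the sharp constants in~\eqref{ner2} (established in the author's earlier paper) to obtain $A_p\le\tfrac12\le B_q$ for $0\le p\le m(\infty,a)$, $q\ge a^2$, and then matches both with the common limit $\tfrac12$ at $t\to0^+$ coming from~\eqref{ass1}. So your instinct that this is the only place requiring external input is right.
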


       \begin{theorem}\label{th2}
Assume that $a>0$, $\gamma+1<0$, $\alpha>0$,  $\mu_0\ge0$,
$q\ge0$, $p\ge0$, $D\in\R$ and $E>0$. Then the inequality
      \begin{equation}\label{n3}
      \frac{E}{(q+x)^{\mu}}\le
      S(x,\agam)\le
      \frac{D}{(p+x)^{\mu}}
      \end{equation}
holds for every $\mu>\mu_0$ and $x>0$ if and only if $0\le p\le
a^\alpha\le q$, $D\ge D_p(a,\gamma,\alpha)$, $0<E\le
E_q(a,\gamma,\alpha)$, where
      \begin{equation}
      \begin{split}
&D_p(a,\gamma,\alpha):=\sup_{x>0}e^{p\,x}f(x,a,\gamma,\alpha)\;,
\\&
E_q(a,\gamma,\alpha):=\inf_{x>0}e^{q\,x}f(x,a,\gamma,\alpha)\,.
\end{split}
\end{equation}
In this case, inequality~\eqref{n3} is strict for all $x>0$. If
$p>0$, it is also strict at $x=0$. Moreover,
$D_p(a,\gamma,\alpha)<+\infty$ if and only if $p\le a^\alpha$, and
$E_q(a,\gamma,\alpha)>0$ if and only if $q\ge a^\alpha$. Besides,
$D_p(a,\gamma,\alpha)=\zeta(-\gamma,a)$ for all $p\le a^\alpha$
and $E_q(a,\gamma,\alpha)=a^\gamma$ at $q= a^\alpha$.
      \end{theorem}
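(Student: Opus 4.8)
The plan is to convert both sides of~\eqref{n3} into Laplace-type integrals against the common kernel $t^{\mu-1}e^{-xt}$ and then reduce the two-sided inequality to a pointwise comparison of integrands. First I would use the identity $\lambda^{-\mu}=\frac{1}{\Gamma(\mu)}\int_0^\infty t^{\mu-1}e^{-\lambda t}\,dt$ with $\lambda=(k+a)^\alpha+x>0$ and sum over $k$ (all terms positive, so Tonelli applies) to obtain
\begin{equation*}
S(x,\agam)=\frac{1}{\Gamma(\mu)}\int_0^\infty t^{\mu-1}e^{-xt}\,f(t,a,\gamma,\alpha)\,dt,\qquad x\ge0,\ \mu>0 .
\end{equation*}
Writing the bounding terms in the same way, $\frac{D}{(p+x)^\mu}=\frac{1}{\Gamma(\mu)}\int_0^\infty t^{\mu-1}e^{-xt}\,De^{-pt}\,dt$ and likewise for $E,q$, the inequality~\eqref{n3} becomes equivalent to
\begin{equation*}
\int_0^\infty t^{\mu-1}e^{-xt}\bigl[De^{-pt}-f(t,a,\gamma,\alpha)\bigr]\,dt\ge0,\qquad \int_0^\infty t^{\mu-1}e^{-xt}\bigl[f(t,a,\gamma,\alpha)-Ee^{-qt}\bigr]\,dt\ge0,
\end{equation*}
required for all $\mu>\mu_0$ and $x>0$.

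Next I would show that these integral inequalities hold for all $\mu,x$ if and only if the integrands are nonnegative on $(0,\infty)$. Sufficiency is immediate. For necessity I would use a concentration argument: for fixed $t_0>0$ put $x=\mu/t_0$, so that $d\nu_\mu:=\frac{x^\mu}{\Gamma(\mu)}t^{\mu-1}e^{-xt}\,dt$ is a probability measure on $(0,\infty)$ with mean $t_0$ and variance $t_0^2/\mu\to0$. Since $\gamma+1<0$, Theorem~\ref{th4} gives $f(0^+)=\zeta(-\gamma,a)<\infty$ and $f(+\infty)=0$, so $\psi(t):=De^{-pt}-f(t,a,\gamma,\alpha)$ is bounded and continuous on $(0,\infty)$, and letting $\mu\to\infty$ yields $\int\psi\,d\nu_\mu\to\psi(t_0)\ge0$. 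Hence $De^{-pt}\ge f(t,a,\gamma,\alpha)\ge Ee^{-qt}$ for every $t>0$, which, by the definitions of $D_p$ and $E_q$, is the same as $D\ge D_p(a,\gamma,\alpha)$ and $0<E\le E_q(a,\gamma,\alpha)$.

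It then remains to compute $D_p,E_q$ and read off the constraints on $p,q$. Here I would differentiate termwise, obtaining $\frac{d}{dt}\bigl(e^{pt}f(t,a,\gamma,\alpha)\bigr)=e^{pt}\sum_{k\ge0}(k+a)^\gamma\bigl(p-(k+a)^\alpha\bigr)e^{-(k+a)^\alpha t}$. For $p\le a^\alpha$ every bracket is $\le0$, so $e^{pt}f$ decreases and $D_p=\lim_{t\to0^+}e^{pt}f=\zeta(-\gamma,a)$; for $p>a^\alpha$ the $k=0$ term forces $e^{pt}f\to+\infty$, so $D_p=+\infty$. Symmetrically, $e^{qt}f\to0$ as $t\to+\infty$ when $q<a^\alpha$ (giving $E_q=0$), while for $q\ge a^\alpha$ the function $e^{qt}f$ is continuous, positive, with positive limits at both ends, so $E_q>0$; at $q=a^\alpha$ it decreases to its limit $a^\gamma$, whence $E_q=a^\gamma$. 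Since $D\in\R$ forces $D_p<\infty$ and $E>0$ forces $E_q>0$, we must have $0\le p\le a^\alpha\le q$, which completes the characterization.

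Finally, the strictness follows from the representation: in each integral the integrand is continuous, nonnegative, and (by the strict monotonicity above, or by $f(t,a,\gamma,\alpha)-a^\gamma e^{-a^\alpha t}=\sum_{k\ge1}(k+a)^\gamma e^{-(k+a)^\alpha t}>0$ in the boundary cases) not identically zero, so the integral is strictly positive for every $x>0$; when $p>0$ both representations remain finite and continuous at $x=0$, yielding strictness there as well. The main obstacle is the necessity step, i.e.\ justifying that positivity of all the integrals forces pointwise positivity of the integrands; the concentration of the densities $\nu_\mu$ at $t_0$, together with the boundedness of $\psi$ guaranteed by the assumption $\gamma+1<0$, is precisely what makes this rigorous.
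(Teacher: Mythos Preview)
Your argument is correct, and the integral representation, the computation of $D_p$ and $E_q$ via monotonicity of $e^{pt}f(t,a,\gamma,\alpha)$, and the strictness discussion all coincide with the paper's. The one genuine difference is the necessity step. The paper uses the Bernstein--Hausdorff--Widder theorem: from the identity $(-1)^k\frac{d^{\,k}}{dx^k}\psi(x,\agam,c,p)=\frac{\Gamma(\mu+k)}{\Gamma(\mu)}\psi(x,\agam+k,c,p)$ one sees that if~\eqref{n3} holds for every $\mu>\mu_0$ then, for any fixed $\mu_1>\mu_0$, the function $x\mapsto\psi(x,\ldots,\mu_1,c,p)$ is completely monotone, hence the Laplace transform of a nonnegative measure; uniqueness of that representation then forces $c\,e^{-pt}-f(t,a,\gamma,\alpha)\ge0$ pointwise. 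Your concentration argument with the Gamma densities $\nu_\mu$ reaches the same conclusion by a more elementary route that avoids BHW, exploiting instead that $\gamma+1<0$ makes $f$ (and hence the integrand) bounded so that $\nu_\mu\Rightarrow\delta_{t_0}$ can be applied directly. The trade-off is that the paper's route, once BHW is in hand, needs the hypothesis only along an arithmetic progression $\mu_1,\mu_1+1,\mu_1+2,\ldots$, whereas yours genuinely uses $\mu\to\infty$ and the boundedness specific to $\gamma+1<0$. For the strictness the paper isolates the ``integrand not identically zero'' fact as a separate lemma (Lemma~\ref{le}); your direct argument is essentially the proof of that lemma specialized to the case at hand.
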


If $a>0$, $\gamma\in \R$, $\alpha>0$ and
$\mu>\max\left\{\frac{\gamma}{\alpha};0\right\}$, then
Theorem~\ref{thg2} yields that
      \begin{equation}
      \widetilde{S}(x,\agam)=x^{-\mu}\left(\widetilde{\zeta}(-\gamma,a)+o(1)\right)\;,\;
      x\to+\infty\;.
      \end{equation}
Therefore the following problem is natural for series~\eqref{f1}.
Let $a>0$, $\gamma\in \R$, $\alpha>0$, and let
$\mu_0\ge\max\{\frac{\gamma}{\alpha};0\}$. For which $q\ge 0$,
$p\ge 0$ and $C,F\in \R$ the inequality
      \begin{equation}\label{n1}
      \frac{F}{(q+x)^\mu}\le\widetilde{S}(x,\agam)\le\frac{C}{(p+x)^\mu}
      \end{equation}
holds for any $\mu>\mu_0$ and $x>0$? In the paper due to Tomovski
and Hilfer~\cite{Tomovski_Hilfer}, it is claimed that this is
satisfied in the case $a=1$, $\gamma>0$ if we take $p=C=1$ and
$\mu_0=\frac{\gamma+1}{\alpha}$ in the right-hand side of
the~\eqref{n1}. A mistake in the proof of this Tomovski and
Hilfer's assertion was indicated by the author in~\cite{Zast2009}.
In the same paper~\cite{Zast2009}, it was proved that for
$m,\alpha\in N$, $\gamma=4m+5$, $\alpha\mu-\gamma>0$ the
right-hand side inequality of the~\eqref{n1} with $a=p=C=1$ is
impossible for large $x>0$. Theorem~\ref{th3} presents all the
solutions to this problem.
      \begin{theorem}\label{th3}
Suppose that $a>0$, $\gamma\in \R$, $\alpha>0$,
$\mu_0\ge\max\{\frac{\gamma}{\alpha};0\}$, $q\ge 0$, $p\ge 0$ and
$C,F\in \R$. Then the inequality~\eqref{n1} is satisfied for every
$\mu>\mu_0$ and $x>0$ if and only if $0\le p\le a^\alpha$,  $C\ge
C_p(a,\gamma,\alpha)$,  $F\le F_q(a,\gamma,\alpha)$, where
      \begin{equation}
      \begin{split}
      &C_p(a,\gamma,\alpha):=\sup_{x>0}e^{p\,x}\widetilde{f}(x,a,\gamma,\alpha)\;,
      \\&
      F_q(a,\gamma,\alpha):=\inf_{x>0}e^{q\,x}\widetilde{f}(x,a,\gamma,\alpha).
      \end{split}
      \end{equation}
In this case, the inequality~\eqref{n1} is strict for $x>0$. If
$q,p>0$, it is also strict at $x=0$. Moreover, we have
$0<C_p(a,\gamma,\alpha)<+\infty$ for $p\le a^\alpha$ and
$C_p(a,\gamma,\alpha)=+\infty$ for $p> a^\alpha$.
      \end{theorem}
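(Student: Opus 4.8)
The plan is to pass from the alternating Mathieu-type series to the exponential sum $\widetilde{f}$ by means of the Gamma-integral
\[
\frac{1}{((k+a)^\alpha+x)^\mu}=\frac{1}{\Gamma(\mu)}\int_0^{\infty}t^{\mu-1}e^{-((k+a)^\alpha+x)t}\,dt .
\]
Summing over $k$ and interchanging summation and integration (legitimate under the stated convergence conditions, for the alternating series after pairing consecutive terms) gives the key identity
\[
\widetilde{S}(x,a,\gamma,\alpha,\mu)=\frac{1}{\Gamma(\mu)}\int_0^{\infty}t^{\mu-1}e^{-xt}\,\widetilde{f}(t,a,\gamma,\alpha)\,dt ,
\]
while the same formula applied to a single exponential yields $(q+x)^{-\mu}=\Gamma(\mu)^{-1}\int_0^{\infty}t^{\mu-1}e^{-xt}e^{-qt}\,dt$. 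Consequently the two inequalities in~\eqref{n1} are equivalent, for all $\mu>\mu_0$ and $x>0$, to the nonnegativity of
\[
\int_0^\infty t^{\mu-1}e^{-xt}\big(Ce^{-pt}-\widetilde{f}(t,a,\gamma,\alpha)\big)\,dt\ge0\quad\text{and}\quad \int_0^\infty t^{\mu-1}e^{-xt}\big(\widetilde{f}(t,a,\gamma,\alpha)-Fe^{-qt}\big)\,dt\ge0 .
\]

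For sufficiency, if $C\ge C_p(a,\gamma,\alpha)$ then $Ce^{-pt}\ge C_p(a,\gamma,\alpha)e^{-pt}\ge\widetilde{f}(t,a,\gamma,\alpha)$ for every $t>0$, straight from the definition of $C_p$ as a supremum; likewise $F\le F_q(a,\gamma,\alpha)$ yields $Fe^{-qt}\le\widetilde{f}(t,a,\gamma,\alpha)$. The integrands above are then nonnegative, so the integrals are, and~\eqref{n1} follows. Strictness for $x>0$ comes from the observation that $t\mapsto Ce^{-pt}-\widetilde{f}(t,a,\gamma,\alpha)$ is continuous and not identically zero (it is a nontrivial superposition of the distinct exponentials $e^{-(k+a)^\alpha t}$ together with $e^{-pt}$), hence strictly positive on a subinterval on which the kernel $t^{\mu-1}e^{-xt}$ is strictly positive. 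At $x=0$ the kernel loses its decay, so one needs $p>0$ (respectively $q>0$) merely to guarantee convergence and strict positivity of the relevant integral.

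The crux is necessity: I must show that nonnegativity of the first integral for all admissible $(\mu,x)$ forces $Ce^{-pt}\ge\widetilde{f}(t,a,\gamma,\alpha)$ pointwise. Setting $\varphi(t):=Ce^{-pt}-\widetilde{f}(t,a,\gamma,\alpha)$, I would argue by a Laplace-type concentration. For a fixed $t_0>0$ choose $x=(\mu-1)/t_0$, so that the kernel $t^{\mu-1}e^{-xt}=\exp\{(\mu-1)\ln t-xt\}$ attains its maximum precisely at $t=t_0$. As $\mu\to+\infty$ the normalized measures $t^{\mu-1}e^{-xt}\,dt\big/\int_0^\infty t^{\mu-1}e^{-xt}\,dt$ concentrate at $t_0$ (Laplace's method, with width $\sim t_0/\sqrt{\mu-1}$), so the ratio of the integral of $\varphi$ to the normalizing integral tends to $\varphi(t_0)$. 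Nonnegativity of the numerator for every $\mu$ then forces $\varphi(t_0)\ge0$, and since $t_0$ is arbitrary we obtain $C\ge\sup_{t>0}e^{pt}\widetilde{f}(t,a,\gamma,\alpha)=C_p$; the identical argument applied to the second integral gives $F\le F_q$. This concentration step—controlling the tails so that only a shrinking neighbourhood of $t_0$ contributes—is the main obstacle, and it is exactly where continuity of $\widetilde{f}$ on $(0,\infty)$ and its controlled behaviour at both endpoints are used.

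Finally, the admissible range of $p$ and the \emph{Moreover} assertion follow from the endpoint asymptotics of $\widetilde{f}$. As $t\to+\infty$ the term $k=0$ dominates the alternating sum, so $\widetilde{f}(t,a,\gamma,\alpha)\sim a^\gamma e^{-a^\alpha t}$ and hence $e^{pt}\widetilde{f}(t,a,\gamma,\alpha)\sim a^\gamma e^{(p-a^\alpha)t}$; since $a^\gamma>0$ this tends to $+\infty$ when $p>a^\alpha$, so $C_p=+\infty$ and no finite $C$ can meet $C\ge C_p$, whence $p\le a^\alpha$ is necessary. For $p\le a^\alpha$ the same product tends to $a^\gamma$ (if $p=a^\alpha$) or to $0^+$ (if $p<a^\alpha$), while near $t=0$ Theorem~\ref{th5} gives $\widetilde{f}(0^+,a,\gamma,\alpha)=\widetilde{\zeta}(-\gamma,a)$; thus $e^{pt}\widetilde{f}(t,a,\gamma,\alpha)$ is continuous and bounded on $(0,\infty)$ and takes positive values, so $0<C_p<+\infty$. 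No upper bound on $q$ is needed because $F$ is not required to be positive: for every $q\ge0$ the function $e^{qt}\widetilde{f}(t,a,\gamma,\alpha)$ is bounded below, so $F_q=\inf_{t>0}e^{qt}\widetilde{f}(t,a,\gamma,\alpha)$ is a finite real number and $F\le F_q$ is always satisfiable. Combining these facts yields the stated characterization.
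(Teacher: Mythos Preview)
Your proof is correct and shares with the paper the same starting point: the Laplace--Gamma integral representation
\[
\widetilde{S}(x,a,\gamma,\alpha,\mu)=\frac{1}{\Gamma(\mu)}\int_0^{\infty}t^{\mu-1}e^{-xt}\,\widetilde{f}(t,a,\gamma,\alpha)\,dt,
\]
together with the analogous formula for $(p+x)^{-\mu}$, reducing~\eqref{n1} to sign conditions on the integrand. The sufficiency argument and the endpoint analysis for the \emph{Moreover} clause are essentially identical to the paper's.

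Where you genuinely diverge is in the \emph{necessity} step. The paper observes the differential identity
\[
(-1)^k\frac{d^{\,k}}{dx^k}\,\widetilde{\psi}(x,a,\gamma,\alpha,\mu,c,p)=\frac{\Gamma(\mu+k)}{\Gamma(\mu)}\,\widetilde{\psi}(x,a,\gamma,\alpha,\mu+k,c,p),
\]
so that validity of the inequality for \emph{all} $\mu>\mu_0$ is exactly complete monotonicity of $\widetilde{\psi}$ for a single $\mu$; the Bernstein--Hausdorff--Widder theorem then forces the Laplace density $c\,e^{-pt}-\widetilde{f}(t,a,\gamma,\alpha)$ to be nonnegative. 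You instead recover the pointwise inequality by a Laplace concentration argument: letting $x=(\mu-1)/t_0$ and $\mu\to\infty$ makes the Gamma kernel concentrate at $t_0$, so the normalized integral converges to $\varphi(t_0)$. This works because $\widetilde{f}$ is continuous and bounded on $(0,\infty)$ (finite limits at both ends, by Theorem~\ref{th5} and the large-$t$ asymptotics), which is precisely what is needed to control the tails. Your route is more elementary in that it avoids quoting Bernstein--Hausdorff--Widder, at the cost of carrying out the concentration estimate by hand; the paper's route is shorter once that classical theorem is available and makes transparent why the full range $\mu>\mu_0$ (rather than a single $\mu$) is needed. For strictness the paper isolates the non-constancy statement as a separate lemma (Lemma~\ref{le}), proved via asymptotics; your linear-independence-of-exponentials remark achieves the same end, though you should note that even when $p=(k_0+a)^\alpha$ for some $k_0$ the remaining terms of $\widetilde{f}$ still make the superposition nontrivial.
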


\section{Preliminaries}
\label{subsec2}
\subsection{Euler gamma-function}
The function $\Gamma(s)$ is analytically continued to the whole
plane $\C$ except to the points $s=-k$, $k\in\Z_+$ in which it has
simple poles. Moreover, the following relations
\begin{equation}\label{11eyler}
 \Gamma(s+1)=s\Gamma(s)\;\;,\;\;\Gamma(s)\Gamma(1-s)=\frac{\pi}{\sin\pi s}\;
\end{equation}
hold for all admissible $s\in\C$. If $s=\sigma+it=|s|e^{i\varphi}$, where $\sigma,t\in\R$ and
$\varphi=\varphi(s)=\arg s\in(-\pi,\pi)$, we have
\begin{equation}\label{11eyler_a}
\left.
\begin{split}
&\Gamma(s+1)=\sqrt{2\pi}s^{s+\frac
12}e^{-s}e^{R(s)}\;\;\;,\;\;\;|R(s)|\le\frac{1}{12|s|\cos^2\frac{\varphi}{2}}\;,
  \\
&|s^{s+\frac 12}|=|s|^{\sigma+\frac
12}e^{-\varphi(s)t}=|s|^{\sigma+\frac 12}e^{-|\varphi(s)|\,
|t|}\;,
  \\
&|\Gamma(s+1)|=\sqrt{2\pi}|s|^{\sigma+\frac
12}e^{-\sigma}e^{-|\varphi(s)|\,  |t|}\,|e^{R(s)}|\;
\end{split}
\right\}
\end{equation}
(see~\cite[\S 1.5.1]{Riekst}). If, in addition, $\Re s=\sigma>0$, then
$|\varphi(s)|=\arctg\frac{|t|}{\sigma}$, and hence
\begin{equation}\label{12eyler}
\left.
\begin{split}
&|\Gamma(s+1)|=\sqrt{2\pi}|s|^{\sigma+\frac
12}e^{-\sigma}e^{-|t|\arctg\frac{|t|}{\sigma}}\,|e^{R(s)}|\;,\;|R(s)|\le\frac{1}{6|s|}\;,
  \\
&|\Gamma(s+1)|\le\sqrt{2\pi}|s|^{\sigma+\frac
12}e^{-\frac{\pi}{2}|t|}\,e^{\frac{1}{6|s|}}\;,
  \\
&\frac{1}{|\Gamma(s+1)|}\le(2\pi)^{-\frac 12}|s|^{-\sigma-\frac
12}e^{\sigma}e^{|t|\arctg\frac{|t|}{\sigma}}\,e^{\frac{1}{6|s|}}\;.
\end{split}
\right\}
\end{equation}
Here we take  the inequalities $|e^{ w}|\le e^{|w|}$, $w\in\C$ and
$0<\frac{\pi}{2}-\arctg u<\frac 1u$, $u>0$ into account.

Let $0<\delta\le\frac{\pi}{2}$, $|\arg s|\le\pi-\delta$, and let
$\Re s=\sigma$. Considering the cases $\sigma>0$ and $\sigma\le 0$
(here $|\arg s|\ge\frac{\pi}{2}$) separately we derive
from~\eqref{11eyler_a} that
\begin{equation}\label{13eyler}
|\Gamma(s+1)|\le\sqrt{2\pi}|s|^{\sigma+\frac
12}\,e^{\frac{|\sigma|-\sigma}{2}}\,e^{-\frac{\pi}{2}|t|}\,
e^{\frac{1}{12|s|\sin^2\frac{\delta}{2}}}\;,\; |\arg
s|\le\pi-\delta\;.
\end{equation}

\subsection{Hurwitz function}
If the case of $a=p+a_0$, where $p\in\N$, $0<a_0\le 1$, we have
\begin{equation}\label{09eyler}
  \zeta(s,a)=\zeta(s,a_0)-\sum_{k=0}^{p-1}\frac{1}{(k+a_0)^s}\;,\;s\ne 1\;.
\end{equation}
The relation~\eqref{09eyler} is obvious whenever $\Re s>1$, and
for the remaining $s\ne 1$ it is implied by the uniqueness theorem
for analytic functions. The following formula is due to Hurwitz:
\begin{equation}\label{10eyler}
\zeta(s,a)=\frac{2\Gamma(1-s)}{(2\pi)^{1-s}}\;\sum_{k=1}^{\infty}\frac{\sin\left(2\pi
ak+\frac{\pi }{2}s\right)}{k^{1-s}}\;,\;\Re s <0\;,\;0<a\le 1\;.
\end{equation}
If $0<a\le 1$ then it follows from~\cite[\S 13.51]{UW} the
existence of positive constants $c(a)>0$ and $t(a)>1$ such that
the inequality
\begin{equation}\label{14eyler}
\left.
\begin{split}
  &|\zeta(\sigma+it,a)|\le c(a)|t|^{\tau(\sigma)}\ln|t|\;,\;|t|\ge t(a)\;,\;\text{where}
  \\
  &\tau(\sigma):=\left\{
 \begin{array}{ccl}
 \frac 12-\sigma&,&\sigma\le 0\;,\\
 \frac 12&,&0\le \sigma\le \frac 12\;,\\
  1-\sigma&,&\frac 12\le \sigma\le 1\;,\\
   0&,& \sigma\ge 1\;
  \end{array}
  \right.
\end{split}
\right\}
\end{equation}
is fulfilled for $\sigma, t\in\R$.

\subsection{Mellin transform}
If $x^{\sigma-1}f(x)\in L(0,+\infty)$ for some $\sigma\in\R$, then
the Mellin transform of a function $f$ is defined by
\begin{equation*}
g(s)=\int_{0}^{+\infty}x^{s-1}f(x)\,dx\;,\;s=\sigma+it\;,\;t\in\R\;.
\end{equation*}
If, in addition, the function $f$ is of bounded variation in a
neighborhood of $x>0$, then the following inversion formula holds:
\begin{equation*}
  \frac{f(x+0)+f(x-0)}{2}= \frac{1}{2\pi i}
  \int_{\sigma-i\infty}^{\sigma+i\infty}g(s)x^{-s}\,ds\;
\end{equation*}
(see~\cite[\S 1.29]{Titch}), where the integral is treated in the
sense of Cauchy principal value. If we take $f(x)=e^{-x}$ and
$f(x)=(x+1)^{-\mu}$, $\mu>0$, we find $g(s)=\Gamma(s)$, $\Re s>0$,
and $g(s)=\frac{\Gamma(\mu-s)\Gamma(s)}{\Gamma(\mu)}$, $0<\Re
s<\mu$, respectively. Therefore for any  $x>0$ the following two
relations are met:
\begin{equation}\label{melin1}
e^{-x}=\frac{1}{2\pi i}
\int_{\sigma-i\infty}^{\sigma+i\infty}\Gamma(s)x^{-s}\,ds\;,\;\sigma>0\;,
\end{equation}
\begin{equation}\label{melin2}
\frac{1}{(x+1)^{\mu}}=\frac{1}{2\pi i}
\int_{\sigma-i\infty}^{\sigma+i\infty}
\frac{\Gamma(\mu-s)\Gamma(s)}{\Gamma(\mu)}\,x^{-s}\,ds\;,\;0<\sigma<\mu\;.
\end{equation}

\section{Proofs of Theorems~\ref{th4}, \ref{th5}, \ref{thg1} and~\ref{thg2} }
\label{subsec3}
\subsection*{Proof of asymptotic expansions in Theorem~\ref{th4}}
Take $\sigma=\beta>\max\left\{0,\frac{\gamma+1}{\alpha}\right\}$
in~\eqref{melin1}, and replace $x$ by $(k+a)^{\alpha}x$, $a>0$,
$k\in\Z_+$, $\alpha>0$, $x>0$. Then we summarize the obtained
inequalities
\begin{equation*}
(k+a)^{\gamma}e^{-(k+a)^{\alpha}x}=\frac{1}{2\pi i}
\int_{\beta-i\infty}^{\beta+i\infty}\Gamma(s)x^{-s}(k+a)^{-\alpha
s+\gamma}\,ds
\end{equation*}
over all $k\in\Z_+$. In the left-hand side, we obtain
$f(x,a,\gamma,\alpha)$. In the right-hand side, we interchange the
sum and the integral signs (this is well defined in view
of~\eqref{11eyler_a},~\eqref{12eyler} and $\alpha\beta-\gamma>1$).
We obtain that for any $a>0$, $\gamma\in \R$,  $\alpha>0$, $x>0$
and $\beta>\max\left\{0,\frac{\gamma+1}{\alpha}\right\}$ the
following relation holds:
\begin{equation}\label{15eyler}
f(x,a,\gamma,\alpha)= \frac{1}{2\pi i}
\int_{\beta-i\infty}^{\beta+i\infty}F(s)\,ds\;,\;
 F(s)=\Gamma(s)x^{-s}\zeta(\alpha s-\gamma,a)\;.
\end{equation}
The function $F(s)$ is analytic on the whole plane except in the
poles $s=-k$, $k\in\Z_+$ and $s=\frac{\gamma+1}{\alpha}$. If
$-\frac{\gamma+1}{\alpha}\not\in\Z_+$ then these poles are
different and simple. Take $\sigma_n=n+\frac 12$, $n\in\Z_+$, and
consider the rectangle
$$
K_{n,m}:=\left\{s\in\C:|\Im s|\le m\,,\,-\sigma_n\le\Re
s\le\beta\right\}\;,\;m\in\N\;,\;n\in\Z_+\;.
$$
If $\sigma_n\ne-\frac{\gamma+1}{\alpha}$ then by the residue
theorem we have
\begin{equation}\label{16eyler}
\frac{1}{2\pi i}\oint_{\partial K_{n,m}}F(s)\,ds=\Sigma_n\;,
\end{equation}
where $\Sigma_n$ stands for the sum of residues of the function
$F$ in poles lying on the interval $(-\sigma_n,\beta)$. If
$n>-\frac 12-\frac{\gamma+1}{\alpha}$, $n\in\Z_+$, then the
interval $(-\sigma_n,\beta)$ contains only poles $s=-k$,
$k=0,\ldots,n$, and $s=\frac{\gamma+1}{\alpha}$. Both
estimates~\eqref{13eyler}, \eqref{14eyler} and
relation~\eqref{09eyler} yield (in the case of $a>1$) that the
integrals over horizontal segments $s=\sigma\pm im$,
$-\sigma_n\le\sigma\le\beta$  tend to zero as $m\to+\infty$ in the
left-hand side of~\eqref{16eyler}. Therefore both~\eqref{15eyler}
and~\eqref{16eyler} imply the following relation:
\begin{equation}\label{17eyler}
f(x,a,\gamma,\alpha)=\Sigma_n+I_n\;,\;
I_n=\frac{1}{2\pi i}
\int_{-\sigma_n-i\infty}^{-\sigma_n+i\infty}F(s)\,
ds\;,\;\sigma_n\ne-\frac{\gamma+1}{\alpha}\;.
\end{equation}
First, let us calculate $\Sigma_n$. It follows from
relation~\eqref{polus} that the expansion of the function
$\zeta(\alpha s-\gamma,a)$ in a Laurent series in a neighborhood
of the pole $s=\frac{\gamma+1}{\alpha}$ can be written as
\begin{equation}\label{18eyler}
\begin{split}
&\zeta(\alpha
s-\gamma,a)=\frac{c_{-1}}{s-\frac{\gamma+1}{\alpha}}+
c_0+c_1\left(s-\frac{\gamma+1}{\alpha}\right)+\ldots\;,\;
 \\&
c_{-1}=\frac{1}{\alpha}\;,\; c_0=-\frac{\Gamma'(a)}{\Gamma(a)}\;.
\end{split}
\end{equation}
Complement formula~\eqref{11eyler} implies that the expansion of
the $\Gamma$-function in a Laurent series  in a neighborhood of
the pole $s=-k$, $k\in\Z_+$ is of the form
\begin{equation}\label{19eyler}
\begin{split}
&\Gamma(s)=\frac{a_{-1}}{s+k}+a_0+a_1\left(s+k\right)+\ldots\;,\;
\\& a_{-1}=\frac{(-1)^k}{\Gamma(k+1)}\;,\;
a_0=\frac{(-1)^k\Gamma'(k+1)}{\Gamma^2(k+1)}\;.
\end{split}
\end{equation}
Thus if $-\frac{\gamma+1}{\alpha}\not\in\Z_+$ then for $n>-\frac
12-\frac{\gamma+1}{\alpha}$, $n\in\Z_+$, the following equality
holds:
\begin{equation}\label{20eyler}
\Sigma_n= \frac{1}{\alpha}\;
\Gamma\left(\frac{\gamma+1}{\alpha}\right)\,
x^{-\frac{\gamma+1}{\alpha}}+
\sum_{k=0}^{n}\frac{(-1)^k}{k!}\,\zeta(-\alpha k-\gamma,a)\,x^k\;.
\end{equation}

If $-\frac{\gamma+1}{\alpha}=r\in\Z_+$ then for
$n>-\frac 12-\frac{\gamma+1}{\alpha}$, $n\in\Z_+$, the relation
\begin{equation}\label{21eyler}
\Sigma_n=
  \underset{s=-r}\res F(s)+ \sum_{k=0,k\ne r}^{n}\underset{s=-k}\res F(s)\;,
\end{equation}
is met, where the residues at the points $s=-k$, $k\in\Z_+$, $k\ne
r$, are calculated as above. To calculate the residue of the
function $F$ at the point $s=-r$ one should take account of the
following expansion of the function $x^{-s}$ in a Taylor series in
a neighborhood of the point $s=-r$:
\begin{equation*}
x^{-s}=b_0+b_1(s+r)+\ldots\;,\;b_0=x^r\;,\;b_1=-x^r\ln x\;.
\end{equation*}
In view of relations~\eqref{18eyler} and~\eqref{19eyler} for
$\frac{\gamma+1}{\alpha}=-r$ and $k=r$ respectively, we obtain the
following expansion of the function $F$ in a Laurent series in a
neighborhood of the pole $s=-r$:
\begin{equation}\label{21eyler_a}
\left.
\begin{split}
F(s)=&\frac{B_{-2}}{(s+r)^2}+\frac{B_{-1}}{(s+r)}+B_0+B_1(s+r)+\ldots\,,
\\
B_{-2}=&c_{-1}a_{-1}b_0=\frac{1}{\alpha}
\cdot\frac{(-1)^r}{\Gamma(r+1)}\,\,  x^r\;,
  \\
B_{-1}=&c_{-1}a_{-1}b_{1}+c_{-1}a_{0}b_{0}+c_{0}a_{-1}b_{0}\;.
\end{split}
  \right\}
\end{equation}
Therefore,
 \begin{equation}\label{22eyler}
 \underset{s=-r}\res F(s)=B_{-1}=
\frac{(-1)^rx^r}{\Gamma(r+1)}\left(-\frac{\ln
x}{\alpha}+\frac{\Gamma'(r+1)}{\Gamma(r+1)}\,\frac{1}{\alpha}-\frac{\Gamma'(a)}{\Gamma(a)}\right)\,.
\end{equation}

Now let us find estimate for the integral $I_n$ in~\eqref{17eyler}. If
$\sigma_n\ne-\frac{\gamma+1}{\alpha}$ we have:
\begin{equation}\label{23eyler}
|I_n|\le\frac{x^{\sigma_n}}{2\pi} \int_{-\infty}^{+\infty}
|\Gamma(-\sigma_n+it)|\,|\zeta(-\alpha\sigma_n-\gamma+i\alpha
t,a)|\,dt\;,\;n\in\Z_+\;,\;x>0\;.
\end{equation}
The convergence of the integral in~\eqref{23eyler} is implied by
both the relation
$$
|\Gamma(-\sigma_n+it)|=\frac{\pi}{\ch(\pi t)\,|\Gamma(1+\sigma_n-it)|}
$$
and estimates~\eqref{12eyler} and~\eqref{14eyler} (in the case of
$a>1$ it should also be considered the relation~\eqref{09eyler}).
Thus asymptotic expansions~\eqref{ass1} and~\eqref{ass2} are
proved.

\subsection*{Case of $0<\alpha\le 1$ in Theorem~\ref{th4}}
For fixed $0<\alpha\le 1$ and $\gamma\in\R$, and an arbitrary
$\varepsilon>0$ we put:
$$
n(\varepsilon,\gamma,\alpha):=\max\left\{\frac{1-\gamma}{\alpha}-\frac
12;\frac{|\gamma|}{\varepsilon}-\frac 12;1\right\}\,.
$$
Then, for all positive integers $n\ge
n(\varepsilon,\gamma,\alpha)$, the following inequalities hold:
\begin{equation*}
\alpha\sigma_n+\gamma\ge 1\;;\;|\gamma|\le \varepsilon\sigma_n\le
\varepsilon|\sigma_n-it|\,,\,t\in\R\;;\;\sigma_n>1\;.
\end{equation*}
If $0<a\le 1$ then we conclude from~\eqref{10eyler}
and~\eqref{12eyler} that, for all positive integers $n\ge
n(\varepsilon,\gamma,\alpha)$ and $t\in\R$, there hold
inequalities (it should also be taken into account that $|\sin
w|\le e^{|\Im w|}$, $w\in\C$):
\begin{equation*}
 \begin{split}
  &|\zeta(-\alpha\sigma_n-\gamma+i\alpha t,a)|
  \le
  \\&
   \frac{2|\Gamma(1+\alpha\sigma_n+\gamma-i\alpha
   t)|}{(2\pi)^{1+\alpha\sigma_n+\gamma}}\,
  e^{\frac{\pi}{2}\alpha|t|}\zeta(1+\alpha\sigma_n+\gamma)\le
  \\
  &\frac{C(\gamma)}{(2\pi)^{\alpha\sigma_n}}\,
  e^{\frac{\pi}{2}\alpha|t|}
  \,|\alpha\sigma_n+\gamma-i\alpha  t|^{\alpha\sigma_n+\gamma+\frac 12}
  e^{-\alpha\sigma_n}
  e^{-\alpha|t|\arctg\frac{\alpha|t|}{\alpha\sigma_n+\gamma}}
  \le
  \\
  &\frac{C(\gamma)}{(2\pi)^{\alpha\sigma_n}}\,
  e^{\frac{\pi}{2}\alpha|t|-\alpha|t|\arctg
  \frac{\alpha|t|}{\alpha\sigma_n+\gamma}}
  e^{-\alpha\sigma_n}
  (\alpha+\varepsilon)^{\alpha\sigma_n+\gamma+\frac 12}
  |\sigma_n-it|^{\alpha\sigma_n+\gamma+\frac 12}\;,
 \end{split}
\end{equation*}
where $C(\gamma)=\frac{2\zeta(2)
e^{\frac{1}{6}-\gamma}}{(2\pi)^{\gamma+1/2}}$, and
$\zeta(s):=\zeta(s,1)$ stands for the Riemann zeta-function. Here
we use the inequalities
$$
\zeta(1+\alpha\sigma_n+\gamma)\le\zeta(2)\,,
$$
$$
|\alpha\sigma_n+\gamma-i\alpha  t|\le \alpha
|\sigma_n-it|+|\gamma|\le (\alpha+\varepsilon)|\sigma_n-it|\,.
$$
Since for all $n\ge n(\varepsilon,\gamma,\alpha)$ and $t\in\R$ the
inequality  $|\sigma_n-it|\ge\sigma_n\ge 1$ is satisfied, we have
for those $n$ and $t$ that
$$
|\sigma_n-it|^{\gamma}\le |\sigma_n-it|^{|\gamma|}\le
\sigma_n^{|\gamma|}\left(1+\frac{|t|}{\sigma_n}\right)^{|\gamma|}\le
\sigma_n^{|\gamma|}\left(1+|t|\right)^{|\gamma|}\;.
$$
Finally, we obtain that for any $n\ge
n(\varepsilon,\gamma,\alpha)$ and $t\in\R$ there holds the
inequality:
\begin{equation}\label{26eyler}
 \begin{split}
  &|\zeta(-\alpha\sigma_n-\gamma+i\alpha t,a)|\le
  \frac{C(\gamma)}{(2\pi)^{\alpha\sigma_n}}\,
  e^{\frac{\pi}{2}\alpha|t|-\alpha|t|\arctg
  \frac{\alpha|t|}{\alpha\sigma_n+\gamma}}
  e^{-\alpha\sigma_n}
  \times\\&
  (\alpha+\varepsilon)^{\alpha\sigma_n+\gamma+\frac 12}
  |\sigma_n-it|^{\alpha\sigma_n+\frac 12}
  \sigma_n^{|\gamma|}\left(1+|t|\right)^{|\gamma|}\;.
 \end{split}
\end{equation}
The relation~\eqref{12eyler} implies the validity of the following
inequality:
\begin{equation*}
 \begin{split}
  |\Gamma(-\sigma_n+it)|=
  &\frac{\pi}{\ch(\pi t)\,|\Gamma(1+\sigma_n-it)|}\le
  \\&
  (2\pi)^{\frac 12}  e^{\frac 16}
   e^{-\pi|t|}|\sigma_n-it|^{-\sigma_n-\frac 12}
  e^{\sigma_n}  e^{|t|\arctg\frac{|t|}{\sigma_n}}\;,
  \end{split}
\end{equation*}
for all $n\ge  n(\varepsilon,\gamma,\alpha)$ and $t\in\R$. Both
the relation~\eqref{26eyler} and the last inequality yield that,
for any $n\ge n(\varepsilon,\gamma,\alpha)$, $t\in\R$ and $0<a\le
1$, the inequality
\begin{equation}\label{27eyler}
 \begin{split}
   &|\zeta(-\alpha\sigma_n-\gamma+i\alpha t,a)\,\Gamma(-\sigma_n+it)|\le
  \\
  &\frac{C_1(\gamma)}{(2\pi)^{\alpha\sigma_n}}\,
  e^{\psi(t)+(1-\alpha)\sigma_n}
  (\alpha+\varepsilon)^{\alpha\sigma_n+\gamma+\frac 12}
  |\sigma_n-it|^{(\alpha-1)\sigma_n}
  \sigma_n^{|\gamma|}\left(1+|t|\right)^{|\gamma|}
 \end{split}
\end{equation}
is fulfilled, where $C_1(\gamma)=(2\pi)^{\frac 12} e^{\frac 16}C(\gamma)$ and
\begin{equation*}
\begin{split}
&\psi(t)=
-\pi|t|+|t|\arctg\frac{|t|}{\sigma_n}+\frac{\pi}{2}\alpha|t|-
\alpha|t|\arctg\frac{\alpha|t|}{\alpha\sigma_n+\gamma}=
-\frac{\pi}{2}|t|-\\
&-(1-\alpha)|t|\left(\frac{\pi}{2}-\arctg\frac{|t|}{\sigma_n}\right)+
\alpha|t|\left(\arctg\frac{|t|}{\sigma_n}-
\arctg\frac{\alpha|t|}{\alpha\sigma_n+\gamma}\right) \le
  \\& -\frac{\pi}{2}|t|+|\gamma|\;.
\end{split}
\end{equation*}
Here we take account of $0<\alpha\le 1$ and use the inequality
$$
|\arctg v-\arctg
u|=\left|\int_{u}^{v}\frac{dx}{x^2+1}\right|\le\left|\int_{u}^{v}\frac{dx}{x^2}\right|=\left|\frac
1u-\frac 1v\right|\,,\,u,v>0\,.
$$
Combining the inequality $|\sigma_n-it|^{(\alpha-1)\sigma_n}\le
\sigma_{n}^{(\alpha-1)\sigma_n}$ in~\eqref{27eyler} with the
estimate for $\psi(t)$ yields the validity of the relation:
\begin{equation*}
 \begin{split}
   &|\zeta(-\alpha\sigma_n-\gamma+i\alpha t,a)\,\Gamma(-\sigma_n+it)|\le
  \\
  &\frac{C_1(\gamma) e^{|\gamma|}}{(2\pi)^{\alpha\sigma_n}}\,
  e^{ -\frac{\pi}{2}|t|}
  e^{(1-\alpha)\sigma_n}
  (\alpha+\varepsilon)^{\alpha\sigma_n+\gamma+\frac 12}
  \sigma_{n}^{(\alpha-1)\sigma_n}
  \sigma_n^{|\gamma|}\left(1+|t|\right)^{|\gamma|}\;
 \end{split}
\end{equation*}
for every $n\ge  n(\varepsilon,\gamma,\alpha)$, $t\in\R$ and
$0<a\le 1$. Applying this inequality to~\eqref{23eyler} we obtain
the following estimate for $I_n$ in~\eqref{17eyler} in the case of
$n\ge  n(\varepsilon,\gamma,\alpha)$:
\begin{equation}\label{28eyler}
\left.
 \begin{split}
   &|I_n|\le
    \frac{x^{\sigma_n}
    e^{(1-\alpha)\sigma_n}(\alpha+\varepsilon)^{\alpha\sigma_n+\gamma+\frac
    12}
    \sigma_{n}^{(\alpha-1)\sigma_n+|\gamma|}}{(2\pi)^{\alpha\sigma_n+1}}\,
    I(\gamma)\,,
    \\
    & I(\gamma)= C_1(\gamma) e^{|\gamma|} \int_{-\infty}^{+\infty}e^{
    -\frac{\pi}{2}|t|}\left(1+|t|\right)^{|\gamma|}\,dt\,.
 \end{split}
 \right\}
\end{equation}
If $0<\alpha<1$ then~\eqref{28eyler} implies that
$\lim\limits_{n\to\infty}I_n=0$ for any $x>0$. If $\alpha=1$ then
$\lim\limits_{n\to\infty}I_n=0$ for every
$x\in(0,\frac{2\pi}{1+\varepsilon})$ and hence for every
$x\in(0,2\pi)$. Thus the second part of Theorem~\ref{th4} is
proved in the case $0<a\le 1$.

If $a> 1$ then $a=p+a_0$, where $p\in\N$, $0<a_0\le 1$, and we
find:
\begin{equation*}
|\zeta(-\alpha\sigma_n-\gamma+i\alpha t,a)|\le
|\zeta(-\alpha\sigma_n-\gamma+i\alpha
t,a_0)|+a(a-1)^{\alpha\sigma_n+\gamma}\;
\end{equation*}
(see~\eqref{09eyler}). In this case the right-hand side of the
inequality~\eqref{28eyler} contains one more summand:
\begin{equation}\label{29eyler}
\begin{split}
  &\frac{x^{\sigma_n}}{\sqrt{2\pi}}
  \int_{-\infty}^{+\infty}
   e^{-\pi|t|}|\sigma_n-it|^{-\sigma_n-\frac 12}
  e^{\sigma_n+\frac 16}
  e^{|t|\arctg\frac{|t|}{\sigma_n}}a(a-1)^{\alpha\sigma_n+\gamma}
  \,dt\le
  \\
  &\frac{x^{\sigma_n}}{\sqrt{2\pi}}\,
  \sigma_{n}^{-\sigma_n-\frac 12}
  e^{\sigma_n+\frac 16}
  a(a-1)^{\alpha\sigma_n+\gamma}
  \int_{-\infty}^{+\infty}
  e^{-\frac{\pi}{2}|t|}\,dt\;,\;n\ge
  n(\varepsilon,\gamma,\alpha)\;.
  \end{split}
  \end{equation}
Here we use the inequality $|\sigma_n-it|^{-\sigma_n-\frac 12}\le
\sigma_{n}^{-\sigma_n-\frac 12}$. The right-hand side of the
inequality~\eqref{29eyler} tends to $0$ as $n\to\infty$ for any
$x>0$. Theorem~\ref{th4} is complete.

\subsection*{Proof of Theorem~\ref{th5}}
The proof follows from both Theorem~\ref{th4} and the obvious
relation
$$
\widetilde{f}(x,a,\gamma,\alpha)={f}(x,a,\gamma,\alpha)-
2^{\gamma+1}{f}\left(2^{\alpha}x,\frac{a+1}{2},\gamma,\alpha\right)\,,\,
x>0\,.
$$

\subsection*{Proof of Theorem~\ref{thg1}}
Take $\sigma=\beta\in\left(\max\left\{0,
\frac{\gamma+1}{\alpha}\right\},\mu\right)$ in~\eqref{melin2}, and
replace $x$ by $(k+a)^{\alpha}x$, $a>0$, $k\in\Z_+$, $\alpha>0$,
$x>0$. Then summarize the obtained relations
\begin{equation*}
\frac{(k+a)^\gamma}{(x(k+a)^\alpha+1)^\mu}=\frac{1}{2\pi i}
\int_{\beta-i\infty}^{\beta+i\infty}
\frac{\Gamma(\mu-s)\Gamma(s)}{\Gamma(\mu)}x^{-s}(k+a)^{-\alpha
s+\gamma}\,ds
\end{equation*}
over all $k\in\Z_+$. In the left-hand side, we obtain
$g(x,a,\gamma,\alpha,\mu)$. In the right-hand side, we interchange
the sum and integral signs  (this is well defined in view
of~\eqref{11eyler_a},~\eqref{12eyler} and $\alpha\beta-\gamma>1$).
It follows that, for any $a>0$, $\gamma\in \R$,  $\alpha>0$,
$\mu>\max\left\{0,\frac{\gamma+1}{\alpha}\right\}$, $x>0$ and
$\max\left\{0,\frac{\gamma+1}{\alpha}\right\}<\beta<\mu$, the
following relation holds:
\begin{equation*}
g(x,a,\gamma,\alpha,\mu)= \frac{1}{2\pi i}
\int_{\beta-i\infty}^{\beta+i\infty}G(s)\,ds\;,
\end{equation*}
where $G(s)=\frac{\Gamma(\mu-s)\Gamma(s)}{\Gamma(\mu)}
x^{-s}\zeta(\alpha
s-\gamma,a)=\frac{\Gamma(\mu-s)}{\Gamma(\mu)}F(s)$ and the
function $F$ from~\eqref{15eyler}.  Singular points of the
functions $G$ and $F$ coincide on the half-plane $\Re s<\mu$.
Consider $\sigma_n=n+\frac 12$, $n\in\Z_+$. As in the proof of
Theorem~\ref{th4}, taking  account of~\eqref{13eyler},
\eqref{14eyler} and the equality~\eqref{09eyler} (for $a>1$), in
the case of $\sigma_n\ne-\frac{\gamma+1}{\alpha}$ we obtain the
following relation:
\begin{equation}\label{b7a}
g(x,a,\gamma,\alpha,\mu)=\Sigma_n+I_n\,,\, I_n=\frac{1}{2\pi i}
\int_{-\sigma_n-i\infty}^{-\sigma_n+i\infty}G(s)\,ds\,,
\end{equation}
where $\Sigma_n$ stands for the sum of residues of the function
$G$ at poles lying on the interval $(-\sigma_n,\beta)$. If
$n>-\frac 12-\frac{\gamma+1}{\alpha}$, $n\in\Z_+$, then the
interval $(-\sigma_n,\beta)$ contains only the poles $s=-k$,
$k=0,\ldots,n$, and $s=\frac{\gamma+1}{\alpha}$. If
$-\frac{\gamma+1}{\alpha}\not\in\Z_+$ then these poles are simple,
and we have:
\begin{equation*}
\begin{split}
&\underset{s=\frac{\gamma+1}{\alpha}}\res
G(s)=\frac{\Gamma\left(\mu-\frac{\gamma+1}{\alpha}
\right)}{\Gamma(\mu)}\,\underset{s=\frac{\gamma+1}{\alpha}}\res
F(s) =\frac{\Gamma\left(\mu-\frac{\gamma+1}{\alpha}\right)\Gamma
\left(\frac{\gamma+1}{\alpha}\right)}{\Gamma(\mu)\,\alpha}\,
x^{-\frac{\gamma+1}{\alpha}}\,,
   \\
&\underset{s=-k}\res G(s)=\frac{\Gamma(\mu+k)}{\Gamma(\mu)}\,
\frac{(-1)^k}{k!}\,\zeta(-\alpha k-\gamma,a)\,x^k\;,\;k\in\Z_+\;.
\end{split}
\end{equation*}
For $-\frac{\gamma+1}{\alpha}=r\in\Z_+$, the residues at the
points $s=-k$, $k\in\Z_+$, $k\ne r$, can be calculated as above.
To calculate the residue of the function $G$ at the point $s=-r$
it should be considered~\eqref{21eyler_a}, \eqref{22eyler} as well
as the following expansion of the function
$\frac{\Gamma(\mu-s)}{\Gamma(\mu)}$ in a Taylor series in a
neighborhood of the point $s=-r$:
\begin{equation*}
\frac{\Gamma(\mu-s)}{\Gamma(\mu)}=A_0+A_1(s+r)+\ldots\;,
\;A_0=\frac{\Gamma(\mu+r)}{\Gamma(\mu)}\;,\;
A_1=-\frac{\Gamma'(\mu+r)}{\Gamma(\mu)}\,\;.
\end{equation*}
\begin{equation*}
\begin{split}
&\underset{s=-r}\res G(s)=B_{-1}A_{0}+B_{-2}A_{1}=
\\
&\frac{\Gamma(\mu+r)(-1)^rx^r}{\Gamma(\mu)\Gamma(r+1)}\left(-\frac{\ln
x}{\alpha}+\frac{\Gamma'(r+1)}{\alpha\,\Gamma(r+1)}-
\frac{\Gamma'(a)}{\Gamma(a)}-
\frac{\Gamma'(\mu+r)}{\alpha\,\Gamma(\mu+r)}\right)\;.
\end{split}
\end{equation*}
Let us find estimate for the integral $I_n$ in~\eqref{b7a}. If
$n\in\Z_+$, $\sigma_n\ne-\frac{\gamma+1}{\alpha}$ and $x>0$, we
have:
\begin{equation}\label{b9}
|I_n|\le\frac{x^{\sigma_n}}{2\pi\Gamma(\mu)}
\int_{-\infty}^{+\infty}
  |\Gamma(\mu+\sigma_n-it)|\,
  |\Gamma(-\sigma_n+it)|\,|\zeta(-\alpha\sigma_n-\gamma+i\alpha t,a)|\,dt\;.
\end{equation}
The convergence of the integral in~\eqref{b9} is proved exactly in
the same way as that for the integral in~\eqref{23eyler}.
Theorem~\ref{thg1} is complete.

\subsection*{Proof of Theorem~\ref{thg2}}
If $\mu>\max\left\{0,\frac{\gamma+1}{\alpha}\right\}$ then the
proof is implied both by Theorem~\ref{thg1} and the relation
$$
\widetilde{g}(x,a,\gamma,\alpha,\mu)={g}(x,a,\gamma,\alpha,\mu)-
2^{\gamma+1}{g}\left(2^{\alpha}x,\frac{a+1}{2},\gamma,\alpha,\mu\right)\,,\,
 x>0\,.
$$
If $\mu>\max\left\{0,\frac{\gamma}{\alpha}\right\}$, the proof is
exactly the same as that of Theorem~\ref{thg1}. It should be noted
that in this case, for any $a>0$, $\gamma\in \R$, $\alpha>0$,
$\mu>\max\left\{0,\frac{\gamma}{\alpha}\right\}$, $x>0$ and
$\max\left\{0,\frac{\gamma}{\alpha}\right\}<\beta<\mu$ the
relation
\begin{equation*}
\widetilde{g}(x,a,\gamma,\alpha,\mu)= \frac{1}{2\pi i}
\int_{\beta-i\infty}^{\beta+i\infty}\widetilde{G}(s)\,ds\;
\end{equation*}
holds, where $\widetilde{G}(s)=
\frac{\Gamma(\mu-s)\Gamma(s)}{\Gamma(\mu)}x^{-s}\widetilde{\zeta}(\alpha
s-\gamma,a)$.

\section{Proofs of Theorems~\ref{th1}, \ref{th2} and~\ref{th3}}
\label{subsec5}
\begin{lemma}\label{le}
Suppose that $a>0$, $\gamma\in \R$ and $\alpha>0$. Then there are
no constants $p,\beta,c\in\R$ such that  one of the identities
$x^\beta e^{p\,x}{f}(x,a,\gamma,\alpha)\equiv c$ or $x^\beta
e^{p\,x}\widetilde{f}(x,a,\gamma,\alpha)\equiv c$ holds for $x>0$.
\end{lemma}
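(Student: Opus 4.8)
The plan is to read off the behaviour of both functions as $x\to+\infty$, where the single term with $k=0$ dominates. Since the exponents $(k+a)^\alpha$ increase strictly in $k$, I would multiply each series by $e^{a^\alpha x}$ and pass to the limit termwise (justified because for $x\ge x_0$ the tail is dominated by a fixed convergent series), obtaining
\begin{equation*}
e^{a^\alpha x}f(x,a,\gamma,\alpha)\to a^\gamma,\qquad e^{a^\alpha x}\widetilde f(x,a,\gamma,\alpha)\to a^\gamma .
\end{equation*}
Thus each of $f$ and $\widetilde f$ is asymptotic to $a^\gamma e^{-a^\alpha x}$ as $x\to+\infty$. Writing $h$ for either $f(x,a,\gamma,\alpha)$ or $\widetilde f(x,a,\gamma,\alpha)$, the fact I would exploit is that in both cases $h(x)e^{a^\alpha x}\to a^\gamma\ne0$.

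Next I would argue by contradiction: suppose $x^\beta e^{p\,x}h(x)\equiv c$ on $(0,+\infty)$ for some real $p,\beta,c$. If $c=0$ then $h\equiv0$, contradicting $h(x)e^{a^\alpha x}\to a^\gamma\ne0$, so $c\ne0$ and $h(x)=c\,x^{-\beta}e^{-p\,x}$. Multiplying by $e^{a^\alpha x}$ gives
\begin{equation*}
c\,x^{-\beta}e^{(a^\alpha-p)x}=h(x)e^{a^\alpha x}\to a^\gamma\ne 0,\qquad x\to+\infty ,
\end{equation*}
and a finite nonzero limit on the left forces successively $p=a^\alpha$ (otherwise the exponential factor sends the limit to $0$ or $\infty$), then $\beta=0$, then $c=a^\gamma$. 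Hence the only way the identity could hold is $h(x)\equiv a^\gamma e^{-a^\alpha x}$ for all $x>0$, and everything reduces to excluding this single possibility.

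To exclude it I would subtract the $k=0$ term and examine the remaining series; this is where the two cases separate and where the only real care is needed. For $f$ the difference $f(x,a,\gamma,\alpha)-a^\gamma e^{-a^\alpha x}=\sum_{k=1}^{\infty}(k+a)^\gamma e^{-(k+a)^\alpha x}$ is a sum of strictly positive terms, hence strictly positive for every $x>0$ and not identically zero. For $\widetilde f$ the tail is no longer sign-definite, so I would strip off its leading exponential instead: multiplying the difference by $e^{(1+a)^\alpha x}$ and letting $x\to+\infty$ isolates the $k=1$ term and yields the limit $-(1+a)^\gamma\ne0$, so the difference is nonzero for all large $x$. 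In either case this contradicts $h(x)\equiv a^\gamma e^{-a^\alpha x}$, completing the argument. The main obstacle is precisely the alternating case, and the device that resolves it is the same ``peel off the next exponential'' mechanism that produced the leading asymptotic in the first place; notably the whole argument is elementary and invokes neither Theorem~\ref{th4} nor Theorem~\ref{th5}.
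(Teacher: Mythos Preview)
Your argument is correct and follows essentially the same route as the paper's proof: use the large-$x$ asymptotics $h(x)\sim a^\gamma e^{-a^\alpha x}$ to pin down $p=a^\alpha$, $\beta=0$, $c=a^\gamma$, and then rule out the single remaining identity $h\equiv a^\gamma e^{-a^\alpha x}$ by examining the tail. For $\widetilde f$ the paper phrases the last step via the shift identity $\widetilde f(x,a,\gamma,\alpha)-a^\gamma e^{-a^\alpha x}=-\widetilde f(x,a+1,\gamma,\alpha)$ and then invokes the same asymptotics for $\widetilde f(\cdot,a+1,\gamma,\alpha)$, which is exactly your ``peel off the next exponential'' step in different notation.
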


\begin{proof}[Proof]
Assume that $x^\beta e^{p\,x}{f}(x,a,\gamma,\alpha)\equiv c$,
$x>0$. Then $c>0$, and it follows from the asymptotics
$f(x,a,\gamma,\alpha)\sim a^{\gamma}e^{-a^\alpha x}$,
$x\to+\infty$ that $p=a^\alpha$ (if $p>a^\alpha$ or $p<a^\alpha$
then $c=+\infty$ or $c=0$ respectively, which is impossible).
Hence $\beta=0$ (if $\beta>0$ or $\beta<0$ then   $c=+\infty$ or
$c=0$, which is impossible) and $c=a^\gamma$. Therefore
${f}(x,a,\gamma,\alpha)\equiv a^\gamma e^{-a^\alpha\,x}$, $x>0$
but ${f}(x,a,\gamma,\alpha)> a^\gamma e^{-a^\alpha\,x}$ for every
$x>0$.

Assume that $x^\beta
e^{p\,x}\widetilde{f}(x,a,\gamma,\alpha)\equiv c$, $x>0$. It
follows from the asymptotics $\widetilde{f}(x,a,\gamma,\alpha)\sim
a^{\gamma}e^{-a^\alpha x}$, $x\to+\infty$ that $c>0$. As above we
obtain similarly that  $\widetilde{f}(x,a,\gamma,\alpha)\equiv
a^\gamma e^{-a^\alpha\,x}$, $x>0$. Then
$\widetilde{f}(x,a+1,\gamma,\alpha)\equiv a^\gamma
e^{-a^\alpha\,x}-\widetilde{f}(x,a,\gamma,\alpha)\equiv 0$, $x>0$
but $\widetilde{f}(x,a+1,\gamma,\alpha)>0$ for large $x>0$.
Lemma~\ref{le} is proved.
\end{proof}

\subsection*{Proof of Theorem~\ref{th1}}
In the case of $a>0$, $\gamma+1>0$,  $\alpha>0$,
$\mu>\frac{\gamma+1}{\alpha}$, $p\ge0$ and $c\in\R$, define the
following function of the variable $x>0$:
$$
\varphi(x,\agam,c,p):=\frac{c}{(p+x)^{\mu-\frac{\gamma+1}{\alpha}}}
\cdot\frac{\Gamma\left(\mu-\frac{\gamma+1}{\alpha}\right)}{\Gamma(\mu)}-
S(x,\agam)\,.
$$
It is easy to check that for any $x>0$ and $k\in\Z_+$ there hold the relations:
\begin{equation}\label{c14}
\left.
\begin{split}
&(-1)^k\frac{d^{\,k}}{dx^k}\left\{\varphi(x,\agam,c,p)\right\}=
\\&
\frac{\Gamma(\mu+k)}{\Gamma(\mu)}\varphi(x,\agam+k,c,p)\,,
\\&
  \varphi(x,\agam,c,p)=
  \\&
  \frac{1}{\Gamma(\mu)}\int_{0}^{+\infty}e^{-xt}t^{\mu-\frac{\gamma+1}{\alpha}-1}
  \left(c\,e^{-pt}-t^{\frac{\gamma+1}{\alpha}}f(t,a,\gamma,\alpha)\right)\,dt\;.
\end{split}
\right\}
\end{equation}
The integral representation in~\eqref{c14} follows from both the
inequalities $\mu>\frac{\gamma+1}{\alpha}$, $p\ge0$ and the
asymptotics $f(t,a,\gamma,\alpha)\sim a^{\gamma}e^{-a^\alpha t}$,
$t\to+\infty$ and $f(t,a,\gamma,\alpha)\sim
\frac{1}{\alpha}\Gamma\left(\frac{\gamma+
1}{\alpha}\right)t^{-\frac{\gamma+1}{\alpha}}$, $t\to+0$ (see
Theorem~\ref{th4}). These asymptotics imply also that
$A_p(a,\gamma,\alpha)<+\infty$ if and only if $p<a^\alpha$, and
$B_q(a,\gamma,\alpha)>0$ if and only if $q\ge a^\alpha$. We will
need the following theorem in what follows
(see~\cite{Bernstein,Hausdorff,Widder}).

\begin{theorem}[Bernstein--Hausdorff--Widder]\label{old}
The following two conditions are equivalent:
\begin{enumerate}
\item A function $f\in  C^{\infty}{(0,+\infty)}$, and the
inequality $(-1)^kf^{(k)}(x)\ge 0$ holds for all $k\in\Z_+$,
$x>0$.

\item  We have $f(x)=\int_0^{+\infty}e^{-xt}\ d\mu(t)$, $ x>0$,
where $\mu$ is a nonnegative Borel measure on $[0,+\infty)$ such
that the integral converges for every $x>0$. In this case the
measure $\mu$ is finite on $[0,+\infty)$ if and only if
$f(+0)<+\infty$.
  \end{enumerate}
  \end{theorem}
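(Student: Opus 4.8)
The plan is to establish the two implications separately. The direction $(2)\Rightarrow(1)$ is routine, while $(1)\Rightarrow(2)$ carries the substantive content (Bernstein's theorem). For $(2)\Rightarrow(1)$, assume $f(x)=\int_0^{+\infty}e^{-xt}\,d\mu(t)$ with the integral convergent for every $x>0$. I would differentiate under the integral sign: fixing $x_0>0$ and working on $[x_0-\delta,x_0+\delta]$ with $x_0-\delta>0$, each weight $t^k e^{-xt}$ is dominated there by $t^k e^{-(x_0-\delta)t}\le C\,e^{-\frac{x_0-\delta}{2}t}$, which is $\mu$-integrable because the Laplace integral converges at $\frac{x_0-\delta}{2}>0$. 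Dominated convergence then gives
$$f^{(k)}(x)=\int_0^{+\infty}(-t)^k e^{-xt}\,d\mu(t),\qquad x>0,$$
so $f\in C^{\infty}(0,+\infty)$ and $(-1)^k f^{(k)}(x)=\int_0^{+\infty}t^k e^{-xt}\,d\mu(t)\ge 0$ since $\mu\ge 0$. Letting $x\to+0$ in the case $k=0$ and invoking monotone convergence yields $f(+0)=\mu([0,+\infty))$, which already settles the finiteness assertion in this direction.

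For $(1)\Rightarrow(2)$ the route I favor is Widder's constructive inversion. Given a completely monotone $f$, I would introduce the nonnegative densities
$$\phi_n(t):=\frac{(-1)^n}{n!}\Big(\frac{n}{t}\Big)^{n+1}f^{(n)}\!\Big(\frac{n}{t}\Big),\qquad t>0,$$
nonnegative by hypothesis, and the associated measures $d\mu_n(t):=\phi_n(t)\,dt$ on $(0,+\infty)$. The first key step is a local uniform mass bound, namely $\mu_n([0,A])\le C(A)$ for each fixed $A>0$ independently of $n$, obtained from the completely monotone inequalities together with integration by parts. The second step is to apply Helly's selection theorem to extract a subsequence $\mu_{n_j}$ converging vaguely on $[0,+\infty)$ to a nonnegative Borel measure $\mu$.

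The crux --- and the step I expect to be the main obstacle --- is to show that the Laplace transforms converge, i.e. $\int_0^{+\infty}e^{-xt}\,d\mu_{n_j}(t)\to f(x)$ for every $x>0$, which then identifies $f(x)=\int_0^{+\infty}e^{-xt}\,d\mu(t)$. This rests on the Post--Widder phenomenon: the kernels implicit in $\phi_n$ are rescaled Gamma densities that concentrate as $n\to\infty$ and behave as an approximate identity, so that $\int_0^{+\infty}e^{-xt}\phi_n(t)\,dt$ reproduces $f(x)$ in the limit. The delicate points are to make this concentration uniform on compact subsets of $(0,+\infty)$ and to rule out escape of mass to $t=+\infty$ under the vague limit; once these are handled, passing to the subsequence delivers the representation, and the finiteness statement $f(+0)=\mu([0,+\infty))$ follows just as in the easy direction by monotone convergence as $x\to+0$. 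As an alternative one could instead invoke Choquet's theorem, recognizing the exponentials $t\mapsto e^{-xt}$ as the extreme rays of the cone of completely monotone functions, but Widder's route keeps the argument self-contained.
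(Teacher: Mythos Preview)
The paper does not prove this theorem at all: it is quoted as a classical result, with references to the original papers of Bernstein, Hausdorff, and Widder, and is then used as a black box in the proofs of Theorems~\ref{th1}--\ref{th3}. There is therefore no ``paper's own proof'' to compare your proposal against.

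That said, your sketch is a reasonable outline of one standard proof. The direction $(2)\Rightarrow(1)$ is handled correctly. For $(1)\Rightarrow(2)$, the Post--Widder/Helly route you describe is indeed one of the classical arguments; you have correctly identified the two genuinely nontrivial points (the uniform local mass bound and the prevention of mass escape to infinity under the vague limit), but you have not actually carried them out, so what you have written is a plan rather than a proof. If you want a fully self-contained argument, an alternative that avoids Helly is to first reduce to a finite interval by showing that complete monotonicity on $(0,\infty)$ implies absolute monotonicity of $x\mapsto f(c-x)$ on $(0,c)$ for each $c>0$, and then invoke the Hausdorff moment characterization; this is closer in spirit to the Hausdorff and Widder references the paper cites.
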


Both the Berstein--Hausdorff--Widder theorem and
relations~\eqref{c14} yield that the validity of
inequalities~\eqref{n2} for every $\mu>\mu_0$ and $x>0$ is
equivalent to that for the inequalities
$Ae^{-pt}-t^{\frac{\gamma+1}{\alpha}}f(t,a,\gamma,\alpha)\ge 0$
and $Be^{-qt}-t^{\frac{\gamma+1}{\alpha}}f(t,a,\gamma,\alpha)\le
0$ for any $t>0$. The last ones are equivalent to the inequalities
$A\ge A_p(a,\gamma,\alpha)$ and $B\le B_q(a,\gamma,\alpha)$
respectively.

The relation~\eqref{ass1} implies that $B_q(a,\gamma,\alpha)\le
\frac{\Gamma\left(\frac{\gamma+1}{\alpha}\right)}{\alpha} \le
A_p(a,\gamma,\alpha)$ for $0\le p<a^2\le q$. If $a\ge 1$ (see the
inequality~\eqref{ner2} and the text below it), we have $\frac
12\le B_q(a,1,2)$, $A_p(a,1,2)\le\frac 12$ for $0\le p\le
m(\infty,a)$, $q\ge a^2$, and hence $B_q(a,1,2)=A_p(a,1,2)=\frac
12$ for any $a\ge 1$ and for the same $p$ and $q$; in particular,
this is satisfied for $p\in[0,a^2-a]$ (since $a^2-a<m(\infty,a)$).

Let $0\le p<a^\alpha$,  $A\ge A_p(a,\gamma,\alpha)$, and let $q\ge
a^\alpha$, $B\le B_q(a,\gamma,\alpha)$. If either the right-hand
side or the left-hand side inequality in~\eqref{n2} turns into an
equality for some $x\ge 0$ (or for $x>0$ if $p=0$) then it follows
from the integral representation~\eqref{c14} that either $A\equiv
e^{pt}t^{\frac{\gamma+1}{\alpha}}f(t,a,\gamma,\alpha)$ or $B\equiv
e^{qt}t^{\frac{\gamma+1}{\alpha}}f(t,a,\gamma,\alpha)$ for $t>0$,
which contradicts Lemma~\ref{le}. Theorem~\ref{th1} is complete.

\subsection*{Proof of Theorem~\ref{th2}}
For $a>0$, $\gamma+1<0$,  $\alpha>0$,  $\mu>0$, $p\ge0$ and
$c\in\R$, define the function
$$
\psi(x,\agam,c,p):=\frac{c}{(p+x)^{\mu}}-  S(x,\agam)\,,\, x>0\,.
$$
It is easy to check that for any $x>0$ and $k\in\Z_+$ there hold the inequalities:
\begin{equation}\label{c15}
\left.
\begin{split}
&(-1)^k\frac{d^{\,k}}{dx^k}\left\{\psi(x,\agam,c,p)\right\}=
\frac{\Gamma(\mu+k)}{\Gamma(\mu)}\psi(x,\agam+k,c,p)\,,
\\&
  \psi(x,\agam,c,p)=\frac{1}{\Gamma(\mu)}\int_{0}^{+\infty}e^{-xt}t^{\mu-1}
  \left(c\,e^{-pt}-f(t,a,\gamma,\alpha)\right)\,dt\;.
\end{split}
\right\}
\end{equation}
The integral representation in~\eqref{c15} is implied by both the
inequalities $\mu>0$, $p\ge0$, the asymptotics
$f(t,a,\gamma,\alpha)\sim a^{\gamma}e^{-a^\alpha t}$,
$t\to+\infty$ as well as by the equality
$f(+0,a,\gamma,\alpha)=\zeta(-\gamma,a)>0$ (see
Theorem~\ref{th4}). These relations yield also that
$D_p(a,\gamma,\alpha)<+\infty$ if and only if $p\le a^\alpha$, and
$E_q(a,\gamma,\alpha)>0$ if and only if $q\ge a^\alpha$. If $p\le
a^\alpha$ then the function $e^{pt}f(t,a,\gamma,\alpha)$ strictly
decreases with respect to $t>0$. Therefore
$D_p(a,\gamma,\alpha)=\zeta(-\gamma,a)$ for all $p\le a^\alpha$
and $E_{a^\alpha}(a,\gamma,\alpha)=a^\gamma$.

Both the Bernstein--Hausdorff--Widder theorem and the
relations~\eqref{c15} imply that the validity of
inequalities~\eqref{n3} for every $\mu>\mu_0$ and $x>0$ is
equivalent to that for the inequalities
$De^{-pt}-f(t,a,\gamma,\alpha)\ge 0$ and
$Ee^{-qt}-f(t,a,\gamma,\alpha)\le 0$ for any $t>0$. The last ones
are equivalent to the inequalities $D\ge D_p(a,\gamma,\alpha)$ and
$E\le E_q(a,\gamma,\alpha)$ respectively.

Let $0\le p\le a^\alpha$, $D\ge D_p(a,\gamma,\alpha)$, and let
$q\ge a^\alpha$, $E\le E_q(a,\gamma,\alpha)$. If either the
right-hand side or the left-hand side inequality in~\eqref{n3}
turns into an equality for some $x\ge 0$ (or for $x> 0$ if $p=0$),
then the integral representation~\eqref{c15} yields that either
$D\equiv e^{pt}f(t,a,\gamma,\alpha)$ or $E\equiv
e^{qt}f(t,a,\gamma,\alpha)$ for $t>0$, which is impossible (see
Lemma~\ref{le}). Theorem~\ref{th2} is complete.

\subsection*{Proof of Theorem~\ref{th3}}
For $a>0$, $\gamma\in \R$, $\alpha>0$,
$\mu>\max\{\frac{\gamma}{\alpha};0\}$, $p\ge0$ and $c\in\R$ define
the function
$$
\widetilde{\psi}(x,\agam,c,p):=\frac{c}{(p+x)^{\mu}}-
  \widetilde{S}(x,\agam)\,,\, x>0\,.
$$
It is easy to check that for any $x>0$ and $k\in\Z_+$ there hold the relations:
\begin{equation}\label{c16}
\left.
\begin{split}
&(-1)^k\frac{d^{\,k}}{dx^k}\left\{\widetilde{\psi}(x,\agam,c,p)\right\}=
\frac{\Gamma(\mu+k)}{\Gamma(\mu)}\widetilde{\psi}(x,\agam+k,c,p)\,,
\\&
  \widetilde{\psi}(x,\agam,c,p)=\frac{1}{\Gamma(\mu)}\int_{0}^{+\infty}e^{-xt}t^{\mu-1}
  \left(c\,e^{-pt}-\widetilde{f}(t,a,\gamma,\alpha)\right)\,dt\;.
\end{split}
\right\}
\end{equation}
The integral representation in~\eqref{c16} follows from both the
inequalities  $\mu>\max\{\frac{\gamma}{\alpha};0\}$, $p\ge0$, the
asymptotics  $\widetilde{f}(t,a,\gamma,\alpha)\sim
a^{\gamma}e^{-a^\alpha t}$, $t\to+\infty$ as well as from the
equality
$\widetilde{f}(+0,a,\gamma,\alpha)=\widetilde{\zeta}(-\gamma,a)$
(see Theorem~\ref{th5}). These relations yield also that
$0<C_p(a,\gamma,\alpha)<+\infty$ for $p\le a^\alpha$ and that
$C_p(a,\gamma,\alpha)=+\infty$ for $p> a^\alpha$, and that
$F_q(a,\gamma,\alpha)>-\infty$ as well.

Both the Bernstein--Hausdorff--Widder theorem and the
equalities~\eqref{c16} imply that the validity of
inequalities~\eqref{n1} for any $\mu>\mu_0$ and $x>0$ is
equivalent to that for the inequalities
$Ce^{-pt}-\widetilde{f}(t,a,\gamma,\alpha)\ge 0$ and
$Fe^{-qt}-\widetilde{f}(t,a,\gamma,\alpha)\le 0$ for any $t>0$.
The last ones are equivalent to the inequalities $C\ge
C_p(a,\gamma,\alpha)$ and $F\le F_q(a,\gamma,\alpha)$
respectively.

Let $0\le p\le a^\alpha$, $C\ge C_p(a,\gamma,\alpha)$, and let
$q\ge 0$, $F\le  F_q(a,\gamma,\alpha)$. If either the right-hand
side or the left-hand side inequality in~\eqref{n1} becomes the
equality for some $x\ge 0$ (or for $x>0$ if $p=0$ or $q=0$) then
the  representation~\eqref{c16} implies that either $C\equiv
e^{pt}\widetilde{f}(t,a,\gamma,\alpha)$ or $F\equiv
e^{qt}\widetilde{f}(t,a,\gamma,\alpha)$ for $t>0$, which is
impossible due to Lemma~\ref{le}. Theorem~\ref{th3} is complete.


\noindent{\bf Acknowledgements.} This work has been supported by the
Foundation for Fundamental Research of Ukraine, Grant F25.1/055.

\hfill{}Translated from Russian by D. Lymanskyi

\bigskip\bigskip

CONTACT INFORMATION

\medskip
  V.P. Zastavnyi\\
  Donetsk National University \\
  Universitetskaya str., 24 \\
  Donetsk 83001 Ukraine \\
  zastavn@rambler.ru

\end{document}